
\documentclass[10pt]{article}%

\usepackage{amsmath,amssymb,amsfonts}%
\usepackage{a4wide}%
\usepackage{theorem}%
\usepackage{epsfig,subfigure}%
\usepackage{color}%
\usepackage{hyperref}%

\sloppy%
\setlength{\parindent}{0ex}%
\setlength{\parskip}{1ex}%
\theoremstyle{change}%

\newtheorem{definition}{Definition:}[section]%
\newtheorem{theorem}[definition]{Theorem:}%
\newtheorem{lemma}[definition]{Lemma:}%
\newtheorem{opt}{Optimization Problem}[section]%
{\theorembodyfont{\rmfamily} \newtheorem{remark}[definition]{Remark:}}%

\newenvironment{proof}
  {{\bf Proof:}}
  {\qquad \hspace*{\fill} $\Box$}%

\newcommand{\tm}{\times}%

\newcommand{\bx}{{\bf x}}

\newcommand{\diag}{\operatorname{diag}}

\newcommand{\co}{\operatorname{co}}

\newcommand{\n}{\nonumber}

\newcommand{\cD}{{\mathcal D}}

\newcommand{\cK}{{\mathcal K}}

\newcommand{\cT}{{\mathcal T}}

\newcommand{\T}{{\mathfrak S}}

\newcommand{\res}{\mathrm{res}}

\newcommand{\diam}{\mathrm{diam}}

\newcommand{\CC}{\mathcal{C}}%
\newcommand{\EC}{\mathcal{E}}%
\newcommand{\MC}{\mathcal{M}}%
\newcommand{\SC}{\mathcal{S}}%
\newcommand{\R}{\mathbb{R}}%
\newcommand{\Z}{\mathbb{Z}}%
\newcommand{\ep}{\varepsilon}%
\newcommand{\rmD}{\mathrm{D}}%
\newcommand{\rmd}{\mathrm{d}}%
\newcommand{\rme}{\mathrm{e}}%
\newcommand{\tp}{\mathrm{top}}%
\newcommand{\N}{\mathbb{N}}%
\newcommand{\CPA}{\operatorname{CPA}}
\newcommand{\trn}{^{\scriptscriptstyle \top}}%

\begin{document}

\title{Numerical approximation of the data-rate limit for state estimation under communication constraints}%
\author{Sigurdur Hafstein\footnote{Faculty of Physical Sciences, University of Iceland, Dunhagi 5, IS-107 Reykjavik, Iceland; e-mail: shafstein@hi.is}\ \ and Christoph Kawan\footnote{Fakult\"{a}t f\"{u}r Informatik und Mathematik, Universit\"{a}t Passau, Passau, Germany; e-mail: christoph.kawan@uni-passau.de}}%
\date{}%
\maketitle%

\begin{abstract}
In networked control, a fundamental problem is to determine the smallest capacity of a communication channel between a dynamical system and a controller above which a prescribed control objective can be achieved. Often, a preliminary task of the controller, before selecting the control input, is to estimate the state with a sufficient accuracy. For time-invariant systems, it has been shown that the smallest channel capacity $C_0$ above which the state can be estimated with an arbitrarily small error, depending on the precise formulation of the estimation objective, is given by the topological entropy or a quantity named restoration entropy, respectively. In this paper, we propose an algorithm that computes rigorous upper bounds of $C_0$, based on previous analytical estimates.%
\end{abstract}

{\small {\bf Keywords:} State estimation; communication constraints; nonlinear systems; topological entropy; restoration entropy; Lyapunov-type functions; numerical computation}%

{\small {\bf AMS Classification:} 37B40, 93C10, 93C41}%


\section{Introduction}%

Networked control systems are spatially distributed systems, in which the communication between sensors, controllers and actuators is accomplished through a shared digital communication network. Examples can be found, for instance, in vehicle tracking, underwater communications for remotely controlled surveillance and rescue submarines, remote surgery, space exploration and aircraft design. Another large field of applications can be found in modern industrial systems, where industrial production is combined with information and communication technology (`Industry 4.0'). A fundamental problem in networked control is to determine the minimal requirements on the communication network for a specified control objective to be achieved.%

In the simplest model case, a sensor measures the states of a dynamical system at discrete sampling times and transmits the encoded state measurements through a finite-capacity channel to a controller at a remote location. In this framework, various works characterize or estimate the smallest channel capacity above which a given control objective (usually, stabilization of some sort) can be achieved. An even more fundamental problem is to determine the smallest capacity above which the controller is able to compute an estimate of the state with a given precision. This problem has been studied under various assumptions on the system and the channel. Notably, Savkin \cite{Sav} characterized the critical capacity by a quantity, which turns out to be infinite if the system is genuinely affected by noise, and otherwise reduces to the topological entropy of the system. A more recent contribution is Matveev and Pogromsky \cite{MP2}, which discusses three estimation objectives of increasing strength and provides constructive methods to obtain upper and lower bounds for the associated critical channel capacities. The contribution of the paper at hand consists in a numerical scheme to compute the upper bounds proposed in \cite{MP1,MP2}. Further studies about state estimation under communication constraints include \cite{KYu,LMi}.%

The systems studied in \cite{MP2} are of the form $x_{t+1} = \phi(x_t)$ with a $C^1$-map $\phi:\R^n \rightarrow \R^n$. The aim is to generate an accurate estimate $\hat{x}_t$ of the state $x_t$ at a remote location for initial states $x_0$ confined to a compact set $K \subset \R^n$. The only way to transmit information to the estimator is via a noiseless discrete channel. At each time instant $t$, a coder encodes $x_t$ by a symbol $e_t$ from a finite coding alphabet $\MC$. This process can be described by maps $\CC_t$ so that%
\begin{equation*}
  e_t = \CC_t(x_0,x_1,\ldots,x_t;\hat{x}_0,\delta),\quad \CC_t:(\R^n)^{t+1} \tm \R^n \tm \R_{>0} \rightarrow \MC,%
\end{equation*}
where $\hat{x}_0$ is an initial estimate satisfying $\|x_0 - \hat{x}_0\| \leq \delta$ for some $\delta>0$, depending on the aspired exactness the estimate. The estimation process similarly can be described by maps $\EC_t$ so that%
\begin{equation*}
  \hat{x}_t = \EC_t(e_0,e_1,\ldots,e_t;\hat{x}_0,\delta),\quad \EC_t:\MC^{t+1} \tm \R^n \tm \R_{>0} \rightarrow \R^n.%
\end{equation*}
To allow a certain flexibility in the transmission of information, the number of bits that can be transmitted in any time interval of length $r$ is not fixed, but confined between two numbers $b_-(r) \leq b_+(r)$, satisfying%
\begin{equation*}
  C := \lim_{r\rightarrow\infty}\frac{b_-(r)}{r} = \lim_{r\rightarrow\infty}\frac{b_+(r)}{r},%
\end{equation*}
where $C$ by definition is the capacity of the channel. A desirable objective is to obtain an estimate of the form $\|x_t - \hat{x}_t\| \leq \ep$ for all $t\geq0$, whenever $x_0,\hat{x}_0\in K$ and $\|x_0 - \hat{x}_0\| \leq \delta$, where $\delta = \delta(\ep)$. Writing $C_0$ for the smallest capacity $C$ above which this can be achieved for any $\ep>0$, it was shown in \cite{MP2} that $C_0 \geq h_{\tp}(\phi;K)$ and $C_0 = h_{\tp}(\phi;K)$ if $K$ is forward-invariant, where $h_{\tp}(\phi;K)$ is the topological entropy of $\phi$ on $K$.%

One problem with the estimation objective addressed above is that the gap between the initial error $\delta$ and the final error $\ep$ may be very large. Another problem is that a coding and estimation policy based on topological entropy is likely to suffer from a severe non-robustness, since topological entropy is highly discontinuous with respect to the dynamical system under consideration. To avoid a drastic degradation of accuracy and at the same time obtain a coding and estimation scheme that is more robust with respect to perturbations, one may require instead that $\|x_t - \hat{x}_t\| \leq G\delta$ for all $t\geq0$ with a constant $G>0$. The smallest channel capacity $C_0$ above which this objective can be achieved can be described in terms of a different entropy notion, introduced in \cite{MP3} under the name \emph{restoration entropy}. A closed-form expression for restoration entropy can be formulated in terms of the singular values of the linearized system. This expression, which at the same time is an upper bound on $h_{\tp}(\phi;K)$, has been derived earlier by the authors of \cite{MP2} in their papers \cite{MP1,PMa}, both for discrete- and continuous-time systems and also for time-varying systems.%

In this paper, we consider a flow $(\phi_t)_{t\in\R}$ generated by an ODE $\dot{x} = f(x)$ with a sufficiently smooth vector field $f$ on $\R^n$. Our analysis focuses on the dynamics of $(\phi_t)$ on a compact forward-invariant set $K$. Essentially following an approach used before for the computation of Lyapunov functions \cite{BGH,GH1,GH2,Mar}, we numerically compute a piecewise affine Riemannian metric on the simplices of a triangulation of $K$, which is then used to produce an upper estimate on $C_0$ in terms of the eigenvalues of the symmetrized derivative of the vector field $f$, computed with respect to that metric. Our algorithm works in two steps. The first one produces a piecewise affine function $P$ on the given triangulation with values in the positive definite $(n\tm n)$ symmetric matrices, designed in such a way to minimize the maximum of the largest generalized eigenvalue. The second step produces a Lyapunov-like function, which is used to scale the metric $P$ in order to make the largest generalized eigenvalue even smaller. It needs to be mentioned that the original estimate in \cite{MP2,PMa} involves not the largest generalized eigenvalue only, but the sum of the $k$ largest generalized eigenvalues, where $1 \leq k \leq n$ is chosen to maximize this sum. Hence, we can only expect good results in low dimensions, where typically $k=1$.%

We apply our algorithm to the well-known Lorenz system with standard parameters on a region containing the attractor. Using a simplified algorithm which works with a constant $P$, we are already able to improve former entropy estimates obtained in \cite{PMa} by analytical methods.%

The paper is organized as follows. In Section \ref{sec_prelim}, we recall the main result of \cite{MP2,PMa}, yielding upper estimates on the topological entropy and the critical channel capacity. Section \ref{sec_stateest} explains the relevance of our algorithm for the problem of state estimation under communications constraints. A detailed description of the algorithm is presented in Section \ref{sec_decription}. In Section \ref{sec_lorenz}, the example of the Lorenz system is discussed. Finally, Section \ref{sec_concl} contains some concluding remarks.%

\section{Preliminaries}\label{sec_prelim}

\paragraph{Notation.} We denote by $\Z$ the set of integers and write $\Z_+ = \{n\in\Z : n \geq 0\}$. We write $\SC_n$ for the space of $(n\tm n)$ real symmetric matrices and $\SC_n^+ \subset \SC_n$ for the space of all positive definite elements of $\SC_n$. If $\phi(t,x)$ denotes the (local) flow of an ODE $\dot{x}=f(x)$ in $\R^n$ and $v:\R^n \rightarrow E$ is a $C^1$-function into a Euclidean space $E$, we write $\dot{v}(x)$ for the \emph{orbital derivative} of $v$ at $x$, i.e.%
\begin{equation*}
  \dot{v}(x) = \frac{\rmd}{\rmd t}\Bigl|_{t=0}v(\phi(t,x)) \in L(\R,E) \cong E.%
\end{equation*}
By $I$ we denote the $(n\tm n)$ identity matrix for any $n\in\N$. By writing $A \succeq B$ for $A,B \in \SC_n$, we mean that $A - B$ is positive semi-definite. Furthermore, we write $B_{\ep}(x) = \{y \in \R^n : \|x - y\| < \ep\}$. Finally, we use the notation $i = 1:n$ as a short-cut for $i \in \{1,\ldots,n\}$.%

\paragraph{Upper bounds for topological entropy.} In the following, we recall the main result of \cite{PMa}, providing upper bounds on topological entropy and critical channel capacity. In \cite{PMa}, the result is formulated for nonautonomous ODEs. However, we only use the following autonomous version.%

Consider an ODE of the form%
\begin{equation}\label{eq_ode}
  \dot{x} = f(x),\quad x \in \R^n%
\end{equation}
with a $C^1$-vector field $f:\R^n \rightarrow \R^n$. Since we only consider solutions that evolve within a compact set, we may assume that all solutions are defined on the whole time domain. We write $\phi(t,x_0)$ for the unique solution satisfying the initial condition $x(0) = x_0$. For a fixed $t\in\R$, we also write $\phi_t:\R^n \rightarrow \R^n$ for the diffeomorphism $x \mapsto \phi(t,x)$. We further assume the existence of a compact forward-invariant set $K \subset \R^n$, i.e.\ $\phi_t(K) \subset K$ for all $t\geq0$.%

The topological entropy of $\phi$ on $K$, denoted by $h_{\tp}(\phi;K)$, can be defined as follows. For $\tau,\ep>0$, a subset $E \subset \R^n$ $(\tau,\ep)$-spans $K$ if for every $x\in K$ there is $y\in E$ with%
\begin{equation*}
  \max_{0\leq t\leq\tau}\|\phi(t,x) - \phi(t,y)\| \leq \ep.%
\end{equation*}
Writing $r(\tau,\ep,\phi,K)$ for the minimal cardinality of any $(\tau,\ep)$-spanning set for $K$,%
\begin{equation*}
  h_{\tp}(\phi;K) := \lim_{\ep\downarrow0}\limsup_{\tau\rightarrow\infty}\frac{1}{\tau}\log_2 r(\tau,\ep,\phi,K).%
\end{equation*}

\begin{theorem}\label{thm_matpog}
Let $P:K \rightarrow \SC_n^+$ and $v_d:K \rightarrow \R$, $1 \leq d \leq n$, be $C^1$-functions and let $\lambda_1(x) \geq \ldots \geq \lambda_n(x)$ denote the solutions of the algebraic equation%
\begin{equation}\label{eq_peq}
  \det\left[ \rmD f(x)\trn P(x) + P(x) \rmD f(x) + \dot{P}(x) - \lambda P(x) \right] = 0.%
\end{equation}
Let $\Lambda_d\geq0$, $1 \leq d \leq n$, be constants so that%
\begin{equation}\label{eq_lyaplikeineq}
  \sum_{i=1}^d \lambda_i(x) + \dot{v}_d(x) \leq \Lambda_d \mbox{\quad for all\ } x\in K.%
\end{equation}
Then for $\Lambda := \max_{1\leq d\leq n}\Lambda_d$, the topological entropy of $\phi$ on $K$ satisfies%
\begin{equation*}
  h_{\tp}(\phi;K) \leq \frac{\Lambda}{2\ln 2}.%
\end{equation*}
\end{theorem}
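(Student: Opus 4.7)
The plan is to reinterpret $P(x)$ as a state-dependent Riemannian inner product on $K$, use the inequality \eqref{eq_lyaplikeineq} to bound the exponential growth rate of $d$-dimensional $P$-volumes along trajectories, and then apply the classical upper bound of topological entropy by maximal volume growth rates. First, I would transport a tangent vector $\xi(t) := \rmD\phi_t(x)\xi_0$ via the variational equation $\dot\xi = \rmD f(\phi_t(x))\xi$ and differentiate its squared $P$-length $|\xi(t)|_{P}^2 := \xi(t)\trn P(\phi_t(x))\xi(t)$ along the flow:
\[
  \frac{\rmd}{\rmd t}|\xi(t)|_{P}^2 = \xi(t)\trn\bigl[\rmD f\trn P + P\,\rmD f + \dot P\bigr](\phi_t(x))\,\xi(t).
\]
The bracketed symmetric matrix is exactly the one appearing in \eqref{eq_peq}, so the generalized Rayleigh principle for the pencil $(\,\cdot\,, P)$ yields $\tfrac{\rmd}{\rmd t}|\xi(t)|_P^2 \leq \lambda_1(\phi_t(x))|\xi(t)|_P^2$.

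Next, I take a $d$-frame $\xi_1(t),\ldots,\xi_d(t)$ jointly evolving under $\rmD\phi_t$ and form the Gram matrix $G(t)_{ij} := \xi_i(t)\trn P(\phi_t(x))\xi_j(t)$; its determinant is the squared $d$-dimensional $P$-volume of the parallelepiped the $\xi_i$ span. Jacobi's formula together with a Ky Fan-type extremal identity for the same generalized eigenvalue pencil gives
\[
  \frac{\rmd}{\rmd t}\log\det G(t) = \operatorname{tr}\bigl(G(t)^{-1}\dot G(t)\bigr) \leq \sum_{i=1}^d \lambda_i(\phi_t(x)).
\]
Combining this with \eqref{eq_lyaplikeineq}, integrating over $[0,\tau]$, and using that $v_d$ is bounded on the compact set $K$,
\[
  \log\det G(\tau) - \log\det G(0) \leq \Lambda_d\tau + 2\max_K|v_d|.
\]
Since $P$ is continuous with values in $\SC_n^+$ on $K$, there exist $0 < c \leq C$ with $cI \preceq P(x) \preceq CI$ on $K$, so $P$-volumes and Euclidean $d$-volumes are uniformly equivalent on $K$. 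Taking a square root to pass from squared volumes to volumes introduces the factor $1/2$, and I conclude that any smooth $d$-dimensional piece of $K$ is mapped by $\phi_\tau$ to a set whose Euclidean $d$-volume grows at most like $\mathrm{const}\cdot\exp(\Lambda_d\tau/2)$.

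Finally, I would invoke the classical upper bound (due to Ito and Newhouse) that topological entropy is dominated by the maximum over $d$ of the exponential growth rate of $d$-dimensional volumes, and convert nats to bits by dividing by $\ln 2$:
\[
  h_{\tp}(\phi;K) \leq \max_{1\leq d\leq n}\frac{\Lambda_d}{2\ln 2} = \frac{\Lambda}{2\ln 2}.
\]
The main obstacle is the Ky Fan extremal step: one must simultaneously diagonalize the pair $(\rmD f\trn P+P\,\rmD f+\dot P,\,P)$ at each point and verify that, over all $d$-frames, the trace $\operatorname{tr}(G^{-1}\dot G)$ is bounded by the sum of the top $d$ generalized eigenvalues, while keeping careful bookkeeping of the factor $1/2$ between $\det G$ and the actual $d$-volume. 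A secondary concern is turning the volume-growth bound into an entropy bound cleanly when $K$ is merely compact (not a manifold); this is typically handled by a Bowen-ball covering argument that compares the $\phi_\tau$-image of a fine Euclidean grid on $K$ to a unit Euclidean ball, giving the required bound on $r(\tau,\ep,\phi,K)$.
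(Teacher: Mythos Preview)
The paper does not prove this theorem; it is stated as a recollection of the main result of \cite{PMa} (see the sentence preceding the theorem and the reference to \cite[Thm.~3.2]{PMa} in the first remark). The only proof-related content the paper supplies is Remark~\ref{rem2}, which records the key intermediate estimate \eqref{eq_singvalineq} from \cite[Prop.~8.6]{PMa}: the product of the top $d$ singular values of $\rmD\phi_t(x)$ with respect to the $P$-metric is bounded by $\exp\bigl(\tfrac{1}{2}\int_0^t\sum_{i\le d}\lambda_i(\phi(s,x))\,\rmd s\bigr)$.

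Your proposal is a correct outline and lands exactly on that inequality: the Gram-determinant differentiation plus the Ky Fan bound is precisely how one derives \eqref{eq_singvalineq}, and your integration step together with the boundedness of $v_d$ on $K$ reproduces the role of the Lyapunov-type functions. The final passage from $d$-volume growth to $h_{\tp}$ via an Ito--Newhouse/Bowen-ball argument is the standard closing step used in \cite{PMa}. So your route coincides with the one the paper points to; there is simply no in-paper proof to compare against beyond that pointer.
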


Some remarks about the formulation of the theorem are in order.%

\begin{remark}
The functions $P$ and $v_d$ in \cite{PMa} depend on three variables, i.e.\ $P = P(t,s,x_0)$ and $v_d = v_d(t,s,x_0)$, where $t \geq s$ are time variables. Such functions can be obtained from the above formulation by putting%
\begin{equation*}
  \tilde{P}(t,s,x_0) := P(\phi(t-s,x_0)),\quad \tilde{v}_d(t,s,x_0) := v_d(\phi(t-s,x_0)),%
\end{equation*}
and it is easy to verify that the so-defined functions satisfy the requirements of \cite[Thm.~3.2]{PMa}.%
\end{remark}

\begin{remark}\label{rem2}
The function $P$ can be interpreted as a Riemannian metric on $K$, defined by%
\begin{equation*}
  \langle v,w \rangle_x := \langle P(x)v,w \rangle \mbox{\quad for all\ } x\in K.%
\end{equation*}
Indeed, let $X(\cdot)$ denote the solution to the following initial value problem corresponding to the variational equation of \eqref{eq_ode}:%
\begin{equation*}
  \dot{Y}(t) = \rmD f(\phi_t(x_0))Y(t),\quad Y(0) = I.%
\end{equation*}
Let $\alpha_1(t) \geq \ldots \geq \alpha_n(t)$ denote the singular values of $X(t)$ w.r.t.~the metric $\langle\cdot,\cdot\rangle_{(\cdot)}$, i.e.\ the eigenvalues of the self-adjoint operator $\sqrt{X(t)^*X(t)}$, where $X(t)^*$ is defined by $\langle X(t)v,w \rangle_{\phi_t(x_0)} \equiv \langle v,X(t)^*w \rangle_{x_0}$. Then, according to \cite[Prop.~8.6]{PMa},%
\begin{equation}\label{eq_singvalineq}
  \alpha_1(t)\alpha_2(t) \cdots \alpha_d(t) \leq \exp\left( \frac{1}{2}\int_0^t [\lambda_1(\phi(s,x_0)) + \cdots + \lambda_d(\phi(s,x_0))] \rmd s\right),\quad 1 \leq d \leq n.%
\end{equation}
We expect that the number $d$, where the maximum $\max_{1\leq d\leq n}\Lambda_d$ is attained, is more or less fixed for a given system under any reasonable choice of the functions $v_1,\ldots,v_n$ (it is something like the number of positive Lyapunov exponents). If this is the case, we can also incorporate the function $v_d$ into the metric by putting%
\begin{equation*}
  \langle v,w \rangle_x := \langle \rme^{v_d(x)/d}P(x)v,w \rangle \mbox{\quad for all\ } x\in K.%
\end{equation*}
Then \eqref{eq_peq} is equivalent to%
\begin{equation*}
  \det\left[ \rmD f(x)\trn P(x) + P(x) \rmD f(x) + \dot{P}(x) - \left(\lambda - \frac{1}{d}\dot{v}_d(x)\right) P(x) \right] = 0,%
\end{equation*}
and thus, the sum $\sum_{i=1}^d \lambda_i(x)$ with the solutions of \eqref{eq_peq} becomes $\sum_{i=1}^d \lambda_i(x) + \dot{v}_d(x)$.%
\end{remark}

The functions $v_i$ in Theorem \ref{thm_matpog} have some similarity with Lyapunov functions. Instead of $\dot{v}_i < 0$ we have the inequalities \eqref{eq_lyaplikeineq}. In the rest of the paper, we call such functions \emph{Lyapunov-type functions}.%

\section{The state estimation problem}\label{sec_stateest}

In this section, we show how Theorem \ref{thm_matpog} is related to the problem of state estimation over a digital channel.%

Consider a dynamical system given by an ODE of the form \eqref{eq_ode}. Suppose that a sensor, fully observing the state $x_t$ of the system, sends its data to an encoder. At discrete sampling times, the encoder sends a signal $e_t$ through a noisefree discrete channel to a decoder (without transmission delay). The decoder acts as an observer of the system, trying to reconstruct the state from the received data. For simplicity, we assume that the times of transmissions are $t = 0,1,2,\ldots$. We write $x_t$ for the state at time $t$ and $\hat{x}_t$ for its estimate generated by the observer. Moreover, we assume that $x_0,\hat{x}_0 \in K$ for a compact and forward-invariant set $K \subset \R^n$. The encoder and the observer are described by mappings%
\begin{equation*}
  e_t = \CC_t(x_0,x_1,\ldots,x_t;\hat{x}_0,\delta),\quad \CC_t:(\R^n)^{t+1} \tm \R^n \tm \R_{>0} \rightarrow \MC,%
\end{equation*}
and%
\begin{equation*}
  \hat{x}_t = \EC_t(e_0,e_1,\ldots,e_t;\hat{x}_0,\delta),\quad \EC_t:\MC^{t+1} \tm \R^n \tm \R_{>0} \rightarrow \R^n.%
\end{equation*}
The argument $\delta$ corresponds to the initial error at time zero, i.e.\ $\|x_0 - \hat{x}_0\| \leq \delta$. In particular, we assume that both the encoder and the observer are given the data $\hat{x}_0$ and $\delta$.%

We assume that the channel can transmit at least $b_-(r)$ and at most $b_+(r)$ bits in any time interval of length $r$. The \emph{capacity} of the channel is then defined by%
\begin{equation*}
  C := \lim_{r\rightarrow\infty}\frac{b_-(r)}{r} = \lim_{r\rightarrow\infty}\frac{b_+(r)}{r},%
\end{equation*}
assuming that these limits exist and coincide.%

We consider the following two observation objectives:%
\begin{enumerate}
\item[(O1)] The observer \emph{observes} the system with exactness $\ep>0$ if there exists $\delta = \delta(\ep,K)$ so that $x_0,\hat{x}_0 \in K$ with $\|x_0 - \hat{x}_0\| \leq \delta$ implies%
\begin{equation*}
  \sup_{t\geq0}\|x_t - \hat{x}_t\| \leq \ep.%
\end{equation*}
\item[(O2)] The observer \emph{regularly observes} the system if there exist $G,\delta_*>0$ so that for all $\delta \in (0,\delta_*)$ and $x_0,\hat{x}_0 \in K$ with $\|x_0 - \hat{x}_0\| \leq \delta$,%
\begin{equation*}
  \sup_{t\geq0}\|x_t - \hat{x}_t\| \leq G\delta.%
\end{equation*}
\end{enumerate}

We say that the system is%
\begin{itemize}
\item \emph{observable on $K$} over a channel of capacity $C$ if for every $\ep>0$ an observer exists which observes the system with exactness $\ep$ over this channel;%
\item \emph{regularly observable on $K$} over a channel of capacity $C$ if there exists an observer which regularly observes the system over this channel.%
\end{itemize}

For objective (O1) we have the following result, cf.~\cite{MP2}:%

\begin{theorem}
The smallest channel capacity $C_0$, so that system \eqref{eq_ode} is observable on $K$ over every channel of capacity $C > C_0$ is given by%
\begin{equation*}
  C_0 = h_{\tp}(\phi;K).%
\end{equation*}
\end{theorem}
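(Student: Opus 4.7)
The plan is to establish the two inequalities $C_0 \geq h_{\tp}(\phi;K)$ and $C_0 \leq h_{\tp}(\phi;K)$ separately, following the spirit of the arguments in \cite{MP2,Sav}.

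For the lower bound, I would fix an $\ep > 0$ and consider any coding--observation policy that observes the system with exactness $\ep$ over a channel of capacity $C$. The key idea is that two initial states $x,y \in K$ which are $(\tau,3\ep)$-separated in the Bowen metric---meaning $\max_{0\le t\le \tau}\|\phi(t,x) - \phi(t,y)\| > 3\ep$---must yield distinct codeword sequences $(e_0,\dots,e_\tau)$ under the given policy, because otherwise the decoder would produce the same estimate trajectory $(\hat x_t)$ starting from the common $\hat x_0$, violating the triangle inequality somewhere on $[0,\tau]$. Hence a maximal $(\tau,3\ep)$-separated subset of $K$ has cardinality at most $2^{b_+(\tau)}$. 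Standard equivalence of separated and spanning numbers lets me convert this into a bound on $r(\tau,\ep',\phi,K)$ for a suitable $\ep'$. Dividing by $\tau$ and letting $\tau\to\infty$ yields $h_{\tp}(\phi;K) \le C$ for every admissible $C$; sending $\ep \downarrow 0$ gives $h_{\tp}(\phi;K) \le C_0$.

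For the upper bound, I would fix $C > h_{\tp}(\phi;K)$ and construct an explicit coder--observer pair. Choose $\eta>0$ with $h_{\tp}(\phi;K) + \eta < C$. By the definition of topological entropy, for every sufficiently small $\ep>0$ and all sufficiently large $\tau$, there is a $(\tau,\ep/2)$-spanning set $E_\tau \subset K$ of cardinality at most $2^{\tau(h_{\tp}(\phi;K)+\eta)}$. Having agreed on $E_\tau$ in advance, the coder selects $y_0 \in E_\tau$ with $\max_{0\le t\le\tau}\|\phi(t,x_0) - \phi(t,y_0)\| \le \ep/2$ and transmits its index, which requires $\le\lceil\tau(h_{\tp}(\phi;K)+\eta)\rceil$ bits; the channel can deliver these in time $\tau$ since $C > h_{\tp}(\phi;K)+\eta$. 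The observer sets $\hat x_t := \phi(t,y_0)$ for $t\in[0,\tau]$. To start the next window, one exploits the fact that $K$ is forward-invariant, so $x_\tau \in K$, and reinitializes the scheme on $[\tau,2\tau]$ using $\hat x_\tau$ as the new initial estimate and $x_\tau$ as the new initial state, iterating indefinitely. An initial handshake using $\delta$ and $\hat x_0$ (known to both sides) synchronizes the schemes; choosing $\delta(\ep)$ small enough ensures the first window's $\ep/2$-approximation also handles the initial mismatch.

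The main obstacle I expect is the bookkeeping in the iterative phase: matching the bit budget $b_-(\tau)\le$ transmitted bits $\le b_+(\tau)$ across successive windows, passing the current $\hat x_{k\tau}$ consistently to both coder and decoder, and verifying the uniform choice of $E_\tau$ so that the $\ep/2$-approximation survives restarting at each $x_{k\tau}$. A subsidiary technicality is aligning the continuous-time Bowen metric, in which $h_{\tp}$ is defined, with the discrete transmission times $t = 0,1,2,\dots$ appearing in the channel model; this is handled by replacing $\ep$ by $\ep/2$ and absorbing the intra-sample drift of $\phi$ into a uniform continuity estimate on the compact set $K$.
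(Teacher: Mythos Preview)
The paper does not prove this theorem; it is quoted from \cite{MP2} (see the sentence ``For objective (O1) we have the following result, cf.~\cite{MP2}'' immediately preceding the statement), so there is no in-paper argument to compare your proposal against. Your plan is the standard two-sided counting argument and is essentially what \cite{MP2,Sav} carry out.

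Two points to tighten. In the lower-bound step you assert that any two $(\tau,3\ep)$-separated initial states in $K$ must receive distinct codeword strings; but for the decoder to be forced to output the \emph{same} estimate trajectory for both, the two states must share a common $\hat x_0$, hence both must lie in a single $\delta$-ball. As written, a maximal $(\tau,3\ep)$-separated set in all of $K$ need not satisfy this. The fix is routine: cover $K$ by finitely many $\delta$-balls, bound the separated count in each ball by $2^{b_+(\tau)}$, and note that the finite multiplicative constant disappears after taking $\frac{1}{\tau}\log_2$ and $\tau\to\infty$.

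In the upper-bound step, the recursion you describe (``reinitialize using $\hat x_\tau$ as the new initial estimate'') would require $\|x_\tau-\hat x_\tau\|\le\delta(\ep)$ at each restart, which you have not arranged and which generally fails since typically $\delta(\ep)\ll\ep$. The clean construction avoids this: the coder has full access to the true state $x_{k\tau}\in K$ at every block boundary (forward invariance), so at each restart it selects a fresh spanning point $y_k\in E_\tau$ for $x_{k\tau}$ and transmits its index; the observer sets $\hat x_t:=\phi(t-k\tau,y_k)$ on $[k\tau,(k+1)\tau]$. No feedback of $\hat x_{k\tau}$ is needed, and $\hat x_0,\delta$ play no essential role beyond fixing the agreed scheme.
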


Due to the problems related to estimation policies based on topological entropy, and the gap between the initial error $\delta$ and the final exactness $\ep$, both explained in the introduction, \cite{MP3} introduces another entropy notion tailored to characterize $C_0$ for objective (O2).%

For $t > 0$, $x \in K$ and $\delta>0$ let $p(t,x,\delta)$ denote the minimal number of $\delta$-balls needed to cover the image $\phi_t(B_{\delta}(x) \cap K)$. The \emph{restoration entropy} of $\phi$ on $K$ is given by%
\begin{equation*}
  h_{\res}(\phi;K) := \lim_{t \rightarrow \infty}\frac{1}{t}\limsup_{\delta\downarrow0}\sup_{x \in K}\log_2 p(t,x,\delta).%
\end{equation*}
The limit in $t$ exists due to subadditivity, and the following data-rate theorem holds, cf.~\cite{MP3}.%

\begin{theorem}
The smallest channel capacity $C_0$, so that system \eqref{eq_ode} is regularly observable on $K$ over every channel of capacity $C > C_0$ is given by%
\begin{equation*}
  C_0 = h_{\res}(\phi;K).%
\end{equation*}
\end{theorem}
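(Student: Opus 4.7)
The plan is to prove the two inequalities $C_0 \geq h_{\res}(\phi;K)$ (necessity) and $C_0 \leq h_{\res}(\phi;K)$ (achievability) separately, in the spirit of the standard data-rate theorem pattern.

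For necessity, I would suppose the system is regularly observable over a channel of capacity $C$ with constants $G,\delta_* > 0$. Fix $y \in K$ and $\delta \in (0,\delta_*)$, and consider arbitrary initial states $x_0 \in B_\delta(y) \cap K$ paired with the shared initial estimate $\hat{x}_0 := y$, so that $\|x_0 - \hat{x}_0\| \leq \delta$. The decoder output $\hat{x}_t$ is determined by the codeword sequence $(e_0,\dots,e_t)$; since at most $b_+(t)$ bits may be transmitted in time $t$, there are at most $2^{b_+(t)}$ admissible such sequences, and hence at most that many distinct values of $\hat{x}_t$. By (O2), the set $\phi_t(B_\delta(y) \cap K)$ is contained in the union of the corresponding balls of radius $G\delta$. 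A standard covering estimate produces a constant $N(G,n)$ such that each ball of radius $G\delta$ can be covered by $N(G,n)$ balls of radius $\delta$, whence $p(t,y,\delta) \leq N(G,n) \cdot 2^{b_+(t)}$. Applying $\log_2$, dividing by $t$, and taking $\sup_{y \in K}$, then $\limsup_{\delta \downarrow 0}$, and finally $\lim_{t\to\infty}$ yields $h_{\res}(\phi;K) \leq C$, so $C_0 \geq h_{\res}(\phi;K)$.

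For achievability, fix $C > h_{\res}(\phi;K)$ and pick $\rho \in (h_{\res}(\phi;K),C)$. By definition of restoration entropy, there exist $T > 0$ and $\delta_0 > 0$ such that $\sup_{y \in K} p(T,y,\delta) \leq 2^{\rho T}$ for all $\delta \in (0,\delta_0]$. I would then construct a block coding scheme that maintains the invariant $\|x_{kT} - \hat{x}_{kT}\| \leq \delta$ for all $k \in \Z_+$, with $\delta \leq \delta_0$ the common initial error. At each block boundary $kT$, encoder and decoder agree on a canonical covering of $\phi_T(B_\delta(\hat{x}_{kT}) \cap K)$ by at most $2^{\rho T}$ balls of radius $\delta$; the encoder transmits the index (in $\lceil \rho T \rceil$ bits) of a ball containing $x_{(k+1)T}$, and the decoder updates $\hat{x}_{(k+1)T}$ to the center of the indicated ball. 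Between sampling times $t \in [kT,(k+1)T)$, the decoder extrapolates $\hat{x}_t := \phi_{t-kT}(\hat{x}_{kT})$, and Lipschitz continuity of $\phi$ on $K$ over time intervals of length $T$ gives $\|x_t - \hat{x}_t\| \leq L\delta$, so (O2) holds with $G := L$.

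The main obstacle, I expect, lies in the achievability step: reconciling the block structure of the code with the variable bit rate. Since $C > \rho$, for $T$ sufficiently large the lower bound $b_-(T)$ exceeds $\lceil \rho T \rceil$, so the index can indeed be transmitted during the block, but one must allow buffering to accommodate the fluctuations of $b_-$ versus $b_+$. A further technical point is the construction of a canonical, encoder--decoder-synchronized covering of $\phi_T(B_\delta(y) \cap K)$ computable from $y$ and $\delta$ alone, so that both parties enumerate the same balls in the same order. Once these technicalities are handled, the scheme realizes (O2) and, combined with the necessity direction, yields $C_0 = h_{\res}(\phi;K)$.
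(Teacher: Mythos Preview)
The paper does not prove this theorem; it is quoted with the reference ``cf.~\cite{MP3}'' and no argument is given in the text. So there is no in-paper proof to compare against.

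Your outline is the standard two-sided data-rate argument and matches in spirit what Matveev and Pogromsky do in \cite{MP3}. The necessity direction is essentially complete: the counting bound $p(t,y,\delta)\le N(G,n)\,2^{b_+(t)}$ via the doubling property of Euclidean balls is exactly the right mechanism, and the constant $N(G,n)$ vanishes after dividing by $t$ and letting $t\to\infty$. The achievability direction is correctly sketched, and you have identified the two genuine technicalities (buffering to absorb the gap between $b_-$ and $b_+$, and a deterministic rule for enumerating coverings so that coder and decoder stay synchronized). One further small point to watch: your recursion needs $\hat{x}_{kT}\in K$ so that the uniform bound on $p(T,\cdot,\delta)$ applies at every step; this is most easily arranged by always choosing the ball centers inside $\phi_T(B_\delta(\hat{x}_{kT})\cap K)\subset K$, at the cost of at most doubling the radius. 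With these details filled in, your argument would constitute a complete proof along the lines of \cite{MP3}.
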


Now, $h_{\res}(\phi;K)$ is a quantity that is much better behaved than $h_{\tp}(\phi;K)$ in several respects. A first manifestation of this is the following characterization of $h_{\res}(\phi;K)$ in terms of the singular values of the derivative $\rmD \phi_t(x)$, cf.~\cite[Thm.~11]{MP3}:%

\begin{theorem}\label{thm_hresform}
Assume that the closure of $K$ equals the closure of its interior. Then%
\begin{equation}\label{eq_hresform}
  h_{\res}(\phi;K) = \lim_{t \rightarrow \infty}\frac{1}{t}\max_{x\in K}\sum_{i=1}^n \max\{0,\log_2 \alpha_i(t,x)\},%
\end{equation}
where $\alpha_1(t,x) \geq \ldots \geq \alpha_n(t,x)$ are the singular values of $\rmD \phi_t(x)$.%
\end{theorem}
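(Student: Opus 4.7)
The plan is to prove the formula by establishing matching upper and lower bounds on $p(t,x,\delta)$ as $\delta \downarrow 0$, the common bound being essentially $\prod_{i : \alpha_i(t,x)>1} \alpha_i(t,x)$ up to multiplicative constants that vanish after dividing by $t$ and letting $t\to\infty$. First, for each fixed $t>0$, I would linearize $\phi_t$ near every $x\in K$: using compactness of $K$ and continuity of $\rmD\phi_t$, for every $\eta>0$ there exists a uniform $\delta_0=\delta_0(t,\eta)>0$ such that
$$\|\phi_t(y)-\phi_t(x)-\rmD\phi_t(x)(y-x)\|\leq\eta\|y-x\|\quad\text{whenever } \|y-x\|\leq\delta_0.$$

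For the upper bound, this inclusion shows that $\phi_t(B_\delta(x)\cap K)$ is contained in the image $\phi_t(x)+\rmD\phi_t(x)B_{(1+\eta)\delta}(\bnull)$, which is an ellipsoid with semi-axes $(1+\eta)\delta\,\alpha_i(t,x)$. A standard covering argument for ellipsoids by balls of radius $\delta$ gives
$$p(t,x,\delta)\leq C_n\prod_{i:\alpha_i(t,x)>1}\alpha_i(t,x),$$
hence $\log_2 p(t,x,\delta)\leq C_n'+\sum_{i=1}^n\max\{0,\log_2\alpha_i(t,x)\}$, uniformly in $x\in K$ and $\delta\leq\delta_0$.

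For the lower bound, the assumption $\cl K=\cl\inner K$ is essential: it guarantees that interior points are dense in $K$, so that for any $x_0\in\inner K$ and $\delta$ small enough, $B_\delta(x_0)\subseteq K$. A standard inverse-function-theorem argument (using invertibility of $\rmD\phi_t(x_0)$ since $\phi_t$ is a diffeomorphism) shows that $\phi_t(B_\delta(x_0))$ contains the ellipsoid $\phi_t(x_0)+\rmD\phi_t(x_0)B_{(1-\eta)\delta}(\bnull)$. Projecting any cover of this ellipsoid by $\delta$-balls onto the $k$-dimensional subspace spanned by the right singular vectors associated with the singular values $\alpha_i(t,x_0)>1$, and comparing volumes in $\R^k$, yields
$$p(t,x_0,\delta)\geq c_n\prod_{i:\alpha_i(t,x_0)>1}\alpha_i(t,x_0),$$
so $\log_2 p(t,x_0,\delta)\geq -c_n'+\sum_{i=1}^n\max\{0,\log_2\alpha_i(t,x_0)\}$. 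Continuity of $x\mapsto\alpha_i(t,x)$ on $K$, combined with density of $\inner K$ in $K$, lets me replace the maximum over $K$ by a supremum over $\inner K$ without changing its value.

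Feeding both bounds into the definition of $h_{\res}(\phi;K)$, the $O(1)$ error terms are absorbed upon dividing by $t$ and letting $t\to\infty$, which gives the desired identity. The main obstacle, as I see it, is making the linearization error uniform in $x$ for fixed $t$ (so that $\limsup_{\delta\downarrow0}\sup_{x\in K}$ can be controlled, and the order of $\sup$ and $\limsup$ can be managed), and handling boundary points, where $B_\delta(x)\cap K$ may be much smaller than a full ball; both points are addressed by compactness of $K$ together with the hypothesis $\cl K=\cl\inner K$, which guarantees that the essential geometry of small balls in $K$ agrees with that of balls in $\R^n$ up to a controlled loss.
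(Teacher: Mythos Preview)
The paper does not actually prove Theorem~\ref{thm_hresform}; it is quoted from \cite[Thm.~11]{MP3} with no argument given. So there is no ``paper's own proof'' to compare your proposal against.

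That said, your outline is along the right lines and matches the standard approach: linearize $\phi_t$ uniformly on $K$, bound the covering number of the image of a small ball from above and below by (essentially) the product of the singular values exceeding $1$, and let the $O(1)$ constants wash out after dividing by $t$. Two technical points deserve attention before this becomes a proof. First, the inclusion you state for the upper bound is not quite correct: from $\|\phi_t(y)-\phi_t(x)-\rmD\phi_t(x)(y-x)\|\le\eta\|y-x\|$ you get
\[
  \phi_t(B_\delta(x))\subset \phi_t(x)+\rmD\phi_t(x)B_\delta(\bnull)+B_{\eta\delta}(\bnull),
\]
a Minkowski sum of the ellipsoid with a small ball, \emph{not} the scaled ellipsoid $\rmD\phi_t(x)B_{(1+\eta)\delta}(\bnull)$ (the two differ when some $\alpha_i<1$). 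The covering estimate survives, but the set you wrote down is wrong. Second, for the lower bound you should project onto the span of the \emph{left} singular vectors of $\rmD\phi_t(x_0)$ (the directions of the semi-axes of the image ellipsoid), not the right ones; and the inclusion $\phi_t(B_\delta(x_0))\supset\phi_t(x_0)+\rmD\phi_t(x_0)B_{(1-\eta)\delta}(\bnull)$ requires a fixed-point or degree argument together with a uniform bound on $\|\rmD\phi_t(\cdot)^{-1}\|$ over $K$, which you should make explicit.
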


The existence of the limit in \eqref{eq_hresform} follows again from subadditivity. Hence, the limit can be replaced by the infimum over all $t>0$. From this fact, one easily sees that $h_{\res}$ depends upper semicontinuously on the system under consideration (in the $C^1$-topology).%

We claim that Theorem \ref{thm_matpog} also holds with $h_{\res}(\phi;K)$ in place of $h_{\tp}(\phi;K)$. A heuristic argument, neglecting the functions $v_1,\ldots,v_n$, proceeds as follows. First, one shows that formula \eqref{eq_hresform} also holds if we compute the singular values of $\rmD\phi_t(x)$ with respect to some Riemannian metric on $K$, described by a $C^1$-function $P:K \rightarrow \SC_n^+$. The adjoint of $\rmD\phi_t(x)$ is then given by%
\begin{equation*}
  \rmD\phi_t(x)^* = P(x)^{-1}\rmD\phi_t(x)\trn P(\phi_t(x)),%
\end{equation*}
hence the singular value equation can be written as%
\begin{equation}\label{eq_gensingvalseq}
  \det\left[\rmD\phi_t(x)\trn P(\phi_t(x)) \rmD\phi_t(x) - \lambda P(x) \right] = 0.%
\end{equation}
Thus, due to subadditivity, for every $t>0$ we have%
\begin{equation}\label{eq_hres_allt}
  h_{\res}(\phi;K) \leq \frac{1}{t}\max_{x\in K}\sum_{i=1}^n \max\{0,\log_2 \alpha_i^P(t,x)\},%
\end{equation}
where $\alpha_1^P(t,x) \geq \ldots \geq \alpha_n^P(t,x)$ are the square-roots of the solutions to \eqref{eq_gensingvalseq}. Assuming the existence of differentiable curves $\lambda:[0,\ep) \rightarrow \R$ and $v:[0,\ep) \rightarrow \R^n$ with $\|v(t)\| \equiv 1$ so that%
\begin{equation}\label{eq_gsvsol}
  \rmD\phi_t(x)\trn P(\phi_t(x)) \rmD\phi_t(x) v(t) = \lambda(t) P(x)v(t) \mbox{\quad for all\ } t \in [0,\ep),%
\end{equation}
differentiation with respect to $t$ at $t = 0$ yields%
\begin{equation*}
  \bigl(\rmD f(x)\trn P(x) + P(x)\rmD f(x) + \dot{P}(x)\bigr)v(0) + P(x) \dot{v}(0) = \dot{\lambda}(0)P(x)v(0) + \lambda(0)P(x)\dot{v}(0).%
\end{equation*}
For $t=0$, equation \eqref{eq_gsvsol} reduces to $P(x) v(0) = \lambda(0)P(x)v(0)$, hence $\lambda(0)=1$. Consequently, the above equation is equivalent to%
\begin{equation*}
  \bigl(\rmD f(x)\trn P(x) + P(x)\rmD f(x) + \dot{P}(x) - \dot{\lambda}(0)P(x)\bigr)v(0) = 0.%
\end{equation*}
Letting $t \rightarrow 0$ in the right-hand side of \eqref{eq_hres_allt} and comparing with Theorem \ref{thm_matpog} then yields the claim. For a precise formulation and proof, we refer to \cite[Thm.~14]{MP3}.%

Hence, we can conclude that Theorem \ref{thm_matpog}, and thus our algorithm yields upper bounds for $h_{\res}(\phi;K)$, i.e.\ for the smallest channel capacity above which the estimation objective (O2) can be achieved. Moreover, the output of our algorithm can be used to implement a coding and estimation policy which leads to regular observation, as is shown in \cite{MP2,MP3}.%

\section{Description of the algorithm}\label{sec_decription}

In this section, we describe the algorithm for computing the upper bounds provided by Theorem \ref{thm_matpog}, which is split into two optimization problems. Before we go into details, we provide a short outline: Our algorithm aims at the computation of optimal functions $P$ and $v_i$ by solving two optimization problems. Starting with a triangulation $\cT$ of the compact forward-invariant set $K$ (or some larger set), the first optimization problem delivers a piecewise affine function $P$ on $K$, affine on each simplex of the triangulation $\cT$, with values in $\SC^+_n$. This is accomplished by solving a semidefinite feasibility problem with linear matrix inequality constraints at each vertex and extension to the whole domain by affine interpolation of the values obtained at the vertices. The decisive quantity in this problem is a positive parameter $\mu$, so that the solution $P$ (if it exists) satisfies $\lambda_{\max}(x) \leq \mu$ for all $x\in K$, where $\lambda_{\max}(x)$ denotes the largest generalized eigenvalue of the pair $(A(x),P(x))$ with%
\begin{equation*}
  A(x) := P(x)\rmD f(x) + \rmD f(x)\trn P(x) + (w_{ij}^\nu \cdot f(x))_{i,j=1:n},%
\end{equation*}
where $w_{ij}^{\nu}$ stands for the gradient of the $(i,j)$-th entry of $P$ on the simplex $\T_{\nu}$ satisfying $x \in \T_{\nu}$. If the algorithm yields a feasible solution for one parameter $\mu_1$, it can be run again for a smaller parameter $\mu_2 < \mu_1$ to check if there is still a feasible solution. Repeating this procedure, the maximum of the largest generalized eigenvalues over $K$ can be minimized.%

The second optimization problems takes as an input a feasible solution $P$ of the first problem and an upper bound $\widetilde{m}$ on the number of positive generalized eigenvalues of the matrix pairs $(A(x),P(x))$. It delivers a piecewise affine real-valued function $V$, affine on each simplex of a triangulation $\cT^*$, which is a refinement of $\cT$, and another real-valued function $\mu$. This is done by solving a semidefinite problem with linear matrix inequality constraints at each vertex of $\cT^*$ and extending again by affine interpolation. The optimization minimizes $Q$ so that for all $x\in K$,%
\begin{equation*}
  A(x) - \mu(x)P(x) \preceq 0 \mbox{\quad and \quad} \dot{V}(x) + \widetilde{m}\mu(x) \leq Q.%
\end{equation*}
A detailed description of these two steps is given in the following subsections. The main results are Theorem \ref{thm_amr1} and Theorem \ref{X4.12}, which show that solutions to the optimization problems, computed at the vertices of the triangulation, extend to solutions on the whole domain of interest by affine interpolation.%

\subsection{The semidefinite optimization problem}

Given vectors $x_0,x_1,\ldots,x_n\in\R^n$ that are affinely independent, i.e.\ the vectors $x_1-x_0,x_2-x_0,\ldots,x_n-x_0$ are linearly independent, the convex hull%
\begin{equation*}
  \T = \co(x_0,x_1,\ldots,x_n) := \left\{\sum_{k=0}^n \lambda_k x_k\,:\, \lambda_k\in[0,1]\ \text{and}\ \sum_{k=0}^n\lambda_k=1\right\}%
\end{equation*}
is called an $n$-simplex or simply a simplex. A set%
\begin{equation*}
  \co(x_{k_0},x_{k_1},\ldots,x_{k_j}) := \left\{\sum_{i=0}^j \lambda_{k_i} x_{k_i}\,:\, \lambda_{k_i}\in[0,1]\ \text{and}\ \sum_{i=0}^j\lambda_{k_i}=1\right\}%
\end{equation*}
with $0\le k_0< k_1 < \ldots <k_j \le n$ and $0\le j <n$ is called a $j$-face of the simplex $\T$.%

\begin{definition}[Triangulation]\label{scdef}
We call a finite set $\cT=\{\T_\nu\}_\nu$ of $n$-simplices $\T_\nu$ a triangulation in $\R^n$ if two simplices $\T_\nu,\T_\mu\in\cT$, $\mu\neq \nu$, intersect in a common face or not at all and the interior $\cD_\cT^\circ$ of $\cD_\cT:=\bigcup_{\T_\nu\in\cT} \T_\nu$ is connected.
\end{definition}

\begin{opt}\label{SDP1}
Given is a system $\dot x=f(x)$, $f\in C^3(\R^n;\R^n)$, a triangulation $\cT$ in $\R^n$, and a parameter $\mu\ge 0$. The optimization problem is a semidefinite feasibility problem with linear matrix inequality constraints.%

{\bf Constants:}
The constants used in this problem are%
\begin{enumerate}
\item $\epsilon_0>0$ -- lower bound on the matrix $P(x_k)$%
\item The diameter $h_\nu$ of each simplex $\T_\nu\in \cT${\rm :}
\begin{equation*}
  h_\nu :=\diam (\T_\nu)=\max_{x,y\in \T_\nu}\|x-y\|_2%
\end{equation*}

\item Upper bounds $B_\nu$ on the second-order derivatives of the components $f_k$ of $f$ on each simplex $\T_\nu\in\cT${\rm :}
\begin{equation}\label{Bdef}
  B_{\nu} \ge \max_{x\in \T_\nu \atop i,j,k=1:n }\left|\frac{\partial^2 f_k(x)}{\partial x_i\partial x_j}\right|%
\end{equation}
\item Upper bounds $B_{3,\nu}$ on the third-order derivatives of the components $f_k$ of $f$ on each simplex $\T_\nu\in\cT${\rm :}
\begin{equation*}
  B_{3,\nu} \ge \max_{x\in \T_\nu \atop i,j,k,l=1:n }\left|\frac{\partial^3 f_l(x)}{\partial x_i\partial x_j\partial x_k}\right|
\end{equation*}
\end{enumerate}

{\bf Variables:}
The variables of the problem are%
\begin{enumerate}
\item $P_{ij}(x_k)\in \mathbb R$ for all $1\le i\le j\le n$ and all vertices $x_k$ of all simplices $\T_\nu=\co(x_0,\ldots,x_{n})\in \cT$.  For $1\le i\le j\le n$ the variable $P_{ij}(x_k)$ is the $(i,j)$-th entry of the $(n\times n)$ matrix $P(x_k)$. The matrix $P(x_k)$ is assumed to be symmetric and therefore these components determine it.%
\item $C_\nu \in \mathbb R_0^+$ for all simplices $\T_\nu\in \cT$ -- upper bound on $P$ in $\T_\nu$%
\item $D_\nu \in \mathbb R_0^+$ for all simplices $\T_\nu\in \cT$ -- upper bound on the derivative of $P_{ij}$ in $\T_\nu$%
\end{enumerate}

{\bf Objective:}
The objective function of the optimization problem is not needed because it is a feasibility problem, but one can, e.g., minimize $\displaystyle \max_{\T_\nu\in\cT}C_\nu$.\\

{\bf Constraints:}

\begin{enumerate}
\item {\bf Positive definiteness of $\mathbf P$}

For each simplex $\T_\nu = \co(x_0,\ldots,x_n)\in\cT$ and each vertex $x_k$ of $\T_\nu$\,{\rm :}%
\begin{equation*}
  P(x_k)\succeq  \epsilon_0 I%
\end{equation*}

\item {\bf Upper bound on $\mathbf P$}

For each simplex $\T_\nu = \co(x_0,\ldots,x_n)\in\cT$ and each vertex $x_k$ of $\T_\nu$\,{\rm :}%
\begin{equation*}
  P(x_k)\preceq C_\nu I%
\end{equation*}

\item {\bf Bound on the derivative of $\mathbf P$}

For each simplex $\T_\nu\in\cT$ and all $1\le i\le j\le n$\,{\rm :}%
\begin{equation*}
  \|w^{\nu}_{ij} \|_1 \le D_{\nu}%
\end{equation*}
Here $w^{\nu}_{ij} = \nabla P_{ij}\big|_{\T_\nu}(x)$ for all $x\in \T_\nu$. See Remark \ref{nugradrem} for details.%

\item {\bf Bounds on the largest generalized eigenvalue}

For each simplex $\T_\nu = \co(x_0,\ldots,x_n)\in\cT$ and each vertex $x_k$ of $\T_\nu$\,{\rm :}%
\begin{equation*}
  0 \succeq A(x_k) - \mu P(x_k) + h_\nu^2  E_{\nu}I
\end{equation*}
Here%
\begin{equation*}
  A(x_k) := P(x_k)\rmD f(x_k) + \rmD f(x_k)\trn P(x_k) + (w_{ij}^\nu \cdot f(x_k))_{i,j=1:n},%
\end{equation*}
where $\rmD f(x_k)$ is the Jacobian matrix of $f$ at $x_k$, $(w_{ij}^\nu \cdot f(x_k))_{i,j=1:n}$ denotes the symmetric $(n\times n)$-matrix with entries $w_{ij}^\nu \cdot f(x_k)$ and $w^{\nu}_{ij}$ is defined as in \eqref{defw} and is the same vector for all vertices in one simplex. Further,%
\begin{equation*}
  E_{\nu} := n^2[(1+4\sqrt{n}) B_\nu D_\nu+2 n B_{3,\nu}C_\nu].%
\end{equation*}
\end{enumerate}
\end{opt}

\begin{remark}\label{nugradrem}
In Constraints 3 and 4 in Optimization Problem \ref{SDP1}, the gradient $w^\nu_{ij}$ of the affine function $P_{ij}\big|_{\T_\nu}$ on the simplex $\T_\nu=\co(x_0,\ldots,x_n)$, i.e.\ $\nabla P_{ij}\big|_{\T_\nu} = w_{ij}^\nu$, is given by the expression%
\begin{equation}\label{defw}
  w^{\nu}_{ij} := X^{-1}_{\nu}\left(\begin{array}{c}P_{ij}(x_1) - P_{ij}(x_0)\\ \vdots\\ P_{ij}(x_n) - P_{ij}(x_0)\end{array}\right)\in\mathbb R^n,%
\end{equation}
where $X_{\nu}=\left(x_1-x_0,x_2-x_0,\ldots,x_n-x_0\right)\trn\in\mathbb R^{n\times n}$ is the so-called shape-matrix of the simplex $\T_\nu$. For a proof of this fact and, moreover, that the definition is independent of the choice of the vertex $x_0$, see \cite[Rem.~2.9]{GH2}.%

The Constraints 3 are indeed linear and can be implemented using the auxiliary variables $D_\nu^{k}$ and the constraints%
\begin{equation*}
  -D_\nu^{k}\le [w^\nu_{ij}]_k \le D_\nu^{k}\ \ \text{for $k=1:n$},%
\end{equation*}
where $[w^\nu_{ij}]_k $ is the $k$-th component of the vector $w^\nu_{ij}$, and setting $D_\nu=\sum_{k=1}^n D_\nu^{k}$. Similarly, the constraints $\|\nabla \mu_\xi\|_\infty\le D_\xi^\mu$ in Optimization Problem \ref{SDP2} can be implemented as%
\begin{equation*}
  -D_\xi^\mu\le [\nabla \mu_\xi]_k \le D_\xi^\mu\ \ \text{for $k=1:n$}.%
\end{equation*}
\end{remark}

\begin{remark}
The constraints above are easily transferred into the standard form $\sum_{i=1}^m F_i y_i-F_0\succeq 0$, $F_0,F_1,\ldots,F_m\in\R^{n\times n}$ constant matrices and $y_1,y_2,\ldots,y_m\in\R$ the variables, for semidefinite programming (SDP) with linear matrix inequality (LMI) constraints. See, e.g., \cite[Rem.~4.10]{GH1} for a similar transfer.
\end{remark}

\begin{remark}
The Optimization Problem \ref{SDP1} always has a feasible solution if the parameter $\mu$ is chosen large enough.  Indeed, even for a fixed $P\succeq \epsilon_0 I$ the constraints are fulfilled for a large enough $\mu$.
\end{remark}

\subsection{Feasible solution to Optimization Problem \ref{SDP1}}

A feasible solution of the Optimization Problem \ref{SDP1} returns a matrix $P(x_k) = \left(P_{ij}(x_k)\right)_{i,j=1:n}$ at each vertex $x_k$ of the triangulation $\cT$ and values $C_\nu$ and $D_\nu$ for each simplex $\T_\nu\in\cT$. From these we can easily obtain $A(x_k)$ and $E_\nu$ as in Constraints 4 at each vertex $x_k$ and for each simplex $\T_\nu$, respectively.%

We define the CPA (\emph{continuous piecewise affine}) metric $M$ by affine interpolation on each simplex.%

\begin{definition}[CPA interpolation]\label{CPA-def}
Let $\cT$ be a triangulation in $\R^n$ with $\cD_\cT = \bigcup_{\T_\nu\in \cT}\T_\nu$. Let $P_{ij}(x_k)$ be fixed by a feasible solution to the Optimization Problem \ref{SDP1}. An $x \in \T_\nu  = \co(x_0,\ldots,x_n)\in\cT$ can be written uniquely as $x=\sum_{k=0}^{n}\lambda_k x_k$ with $\lambda_k\in [0,1]$ and $\sum_{k=0}^{n}\lambda_k=1$ and we define%
\begin{equation*}
  P_{ij}(x) := \sum_{k=0}^{n}\lambda_k P_{ij}(x_k)%
\end{equation*}
and%
\begin{equation}\label{eq_pmatrix}
  P(x) := \begin{pmatrix}
       P_{11}(x) & P_{12}(x)& \cdots & P_{1n}(x) \\
       P_{21}(x) & P_{22}(x)& \cdots & P_{2n}(x) \\
       \vdots & \vdots & \ddots & \vdots \\
       P_{n1}(x) & P_{n2}(x) & \cdots & P_{nn}(x) \\
     \end{pmatrix}.
\end{equation}
We refer to the functions $P_{ij}$ and $P$ as the CPA interpolations of the values $P_{ij}(x_k)$ and $P(x_k)$, respectively, where the $x_k$ are the vertices of the simplices in $\cT$. Furthermore, we write $\CPA[\cT]$ for the space of all piecewise affine functions on $\cD_\cT$ defined in this way by interpolation of the values on the vertices of $\cT$.%
\end{definition}

The following lemma can be proved exactly as \cite[Lem.~4.13]{GH1}.%

\begin{lemma}\label{le1}
The matrix $P(x)$ in \eqref{eq_pmatrix} is symmetric and positive definite for all $x\in\cD_\cT$.
\end{lemma}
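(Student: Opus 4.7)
The plan is very short: exploit that CPA interpolation is a convex combination of vertex values, and that both symmetry and the order relation $\succeq \epsilon_0 I$ are preserved under convex combinations.

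First I would fix an arbitrary $x \in \cD_\cT$ and pick a simplex $\T_\nu = \co(x_0,\ldots,x_n) \in \cT$ containing $x$, with (unique up to the choice of simplex) barycentric coordinates $\lambda_0,\ldots,\lambda_n \in [0,1]$ satisfying $\sum_{k=0}^n \lambda_k = 1$ and $x = \sum_{k=0}^n \lambda_k x_k$. By Definition \ref{CPA-def}, the interpolated matrix has entries $P_{ij}(x) = \sum_{k=0}^n \lambda_k P_{ij}(x_k)$ for all $i,j = 1:n$, so
\begin{equation*}
  P(x) = \sum_{k=0}^n \lambda_k P(x_k).
\end{equation*}

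Symmetry is immediate: Optimization Problem \ref{SDP1} only introduces variables $P_{ij}(x_k)$ for $1\le i \le j\le n$ and stipulates that $P(x_k)$ be the symmetric matrix determined by these entries, so $P(x_k)\trn = P(x_k)$ for every vertex; a convex combination of symmetric matrices is symmetric.

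Positive definiteness follows from Constraint 1 of Optimization Problem \ref{SDP1}, which asserts $P(x_k) \succeq \epsilon_0 I$ at every vertex $x_k$. Since the relation $A \succeq B$ is preserved under nonnegative linear combinations on both sides, and $\sum_{k=0}^n \lambda_k = 1$, we obtain
\begin{equation*}
  P(x) = \sum_{k=0}^n \lambda_k P(x_k) \succeq \sum_{k=0}^n \lambda_k \epsilon_0 I = \epsilon_0 I,
\end{equation*}
so $P(x)$ is positive definite with smallest eigenvalue at least $\epsilon_0$. This holds for every $x \in \cD_\cT$, which finishes the proof.

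There is essentially no obstacle here; the only point to mention is that if $x$ lies in more than one simplex (i.e.\ on a shared face), the argument applies in each, and since the vertex values agree and the barycentric representation on the face is unique, the definition of $P(x)$ is unambiguous, as already built into Definition \ref{CPA-def}. Hence the proof is analogous to \cite[Lem.~4.13]{GH1}, as stated.
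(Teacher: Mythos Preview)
Your proof is correct and is exactly the convex-combination argument one expects: the paper does not spell it out but simply refers to \cite[Lem.~4.13]{GH1}, whose proof proceeds in the same way. There is nothing to add.
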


\begin{definition}[Orbital derivative]\label{basic}
Let $P(x)$ be as in Definition \ref{CPA-def} and fix a point $x\in\cD^\circ_\cT$.  As shown in the proof of \cite[Lem.~4.7]{GH1}, there exists a $\T_\nu=\co(x_0,\ldots,x_n)\in \cT$ and a number $\theta^*>0$ such that $x+\theta f(x)\in \T_\nu$ for all $\theta\in [0,\theta^*]$. Then $x=\sum_{k=0}^{n}\lambda_k x_k$ with $\lambda_k\in [0,1]$, $\sum_{k=0}^{n}\lambda_k=1$, and we define the orbital derivative $\dot{P}_{ij}(x)$ of $P_{ij}$ at $x$ as%
\begin{equation*}
  \dot{P}_{ij}(x) := w_{ij}^\nu \cdot f(x).%
\end{equation*}
\end{definition}

Our definition of the orbital derivative is natural, because with $t\mapsto \phi(t,x)$ as the solution to $\dot x=f(x)$ crossing $x$ at time $t=0$ and for any locally Lipschitz function $g:\R^n\to\R$, we have (cf.~\cite[Thm.~1.17]{Mar})%
\begin{align*}
  \limsup_{h\to 0+}\frac{g(\phi(h,x))-g(x)}{h}=\limsup_{h\to 0+}\frac{g(x+hf(x))-g(x)}{h}%
\end{align*}
and with $\T_\nu$ chosen for $x$ as in Definition \ref{basic}, we have%
\begin{equation*}
  \limsup_{h\to 0+}\frac{P_{ij}(x+hf(x))-P_{ij}(x)}{h} = w_{ij}^\nu \cdot f(x).%
\end{equation*}
Note that $P_{ij}\big|_{\T_\nu}$ is an affine function and its gradient $w^{\nu}_{ij}$ was defined in Constraints 3 of Optimization Problem \ref{SDP1} and is the same vector for all points $x\in \T_\nu$.%

Before proceeding to prove the implications of Optimization Problem \ref{SDP1}, let us first recall a few elementary relations about matrix norms. For an $A\in\R^{n\times n}$ we define%
\begin{equation*}
  \|A\|_{\max}:=\max_{i,j=1:n}|a_{ij}|\quad \text{and}\quad \|A\|_p:=\max_{\|x\|_p=1}\|A x\|_p\  \ \text{for $p=1,2,\infty$.}%
\end{equation*}
The following relations hold:%
\begin{equation*}
  \|A\|_{\max}\le \|A\|_2 \le n \|A\|_{\max},\ \|A\|_2\le \sqrt{n}\|A\|_1,\ \text{and}\ \|A\|_2\le \sqrt{n}\|A\|_\infty.%
\end{equation*}
For a symmetric and positive definite $A$, the largest singular value $\lambda_{\max}$ of $A$, which equals $\|A\|_2$ and is the largest of its eigenvalues, is the smallest number such that $A \preceq \lambda_{\max}I$. Further,%
\begin{equation*}
  \|A\|_1 = \max_{j=1:n}\sum_{i=1}^n|a_{ij}| = \|A\trn\|_\infty.%
\end{equation*}

We will now relate $\dot{P}(x)$ to $\dot{P}(x_k)$, as well as $P(x)\rmD f(x)$ to $P(x_k)\rmD f(x_k)$. For the proof we will need the following auxiliary result, see \cite[Prop.~4.1 and Cor.~4.3]{BGH}. The notation is as in Optimization Problem \ref{SDP1}.%

\begin{lemma}\label{help2}
Denoting the Hessian of $f$ by $H(x):=\left(\frac{\partial^2 f(x)}{\partial x_i\partial x_j}\right)_{ij}$, we have%
\begin{equation*}
  \left\|f(x)-\sum_{k=0}^{n}\lambda_k f(x_k)\right\|_\infty\le\max_{x\in \T_\nu}\|H(x)\|_2 h_\nu^2 \le n B_\nu h_\nu^2.%
\end{equation*}
\end{lemma}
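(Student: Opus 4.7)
The statement is componentwise in the vector-valued function $f$, so the natural approach is to work one component at a time and then pass to $\|\cdot\|_\infty$ by taking the maximum. For each $l \in \{1,\ldots,n\}$, let $H_l(x)$ denote the (scalar) Hessian of $f_l$, so that $H(x)$ in the statement is to be read as the collection $(H_l(x))_{l=1:n}$ with $\|H(x)\|_2 = \max_l \|H_l(x)\|_2$; each $H_l(x)$ is a symmetric $n\times n$ real matrix with entries $\partial^2 f_l(x)/\partial x_i\partial x_j$.

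First I would expand $f_l$ around the point $x = \sum_{k=0}^n \lambda_k x_k \in \T_\nu$ using Taylor's theorem with second-order integral remainder: for each vertex $x_k$,
\begin{equation*}
  f_l(x_k) = f_l(x) + \nabla f_l(x)\cdot (x_k - x) + \int_0^1 (1-s)\,(x_k - x)\trn H_l\bigl(x + s(x_k - x)\bigr)(x_k - x)\,\rmd s.
\end{equation*}
The next step is to take the convex combination $\sum_{k=0}^n \lambda_k$ of these equalities. The constant terms give $f_l(x)$ (since $\sum_k \lambda_k = 1$), and the crucial observation is that the linear terms vanish: because $x = \sum_k \lambda_k x_k$, we have $\sum_k \lambda_k (x_k - x) = 0$, so $\nabla f_l(x)\cdot \sum_k \lambda_k(x_k - x) = 0$. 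What remains is only the integral remainder.

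Then I would estimate the remainder. For each $k$ and each $s \in [0,1]$, the point $x + s(x_k - x)$ lies in $\T_\nu$ (by convexity), and
\begin{equation*}
  \bigl|(x_k - x)\trn H_l(\cdot)(x_k - x)\bigr| \le \|H_l(\cdot)\|_2\,\|x_k - x\|_2^2 \le \max_{y\in\T_\nu}\|H_l(y)\|_2\,h_\nu^2.
\end{equation*}
Combined with $\int_0^1(1-s)\,\rmd s = 1/2$ and $\sum_k \lambda_k = 1$, this yields
\begin{equation*}
  \Bigl|\sum_{k=0}^n \lambda_k f_l(x_k) - f_l(x)\Bigr| \le \tfrac{1}{2}\max_{y\in \T_\nu}\|H_l(y)\|_2\,h_\nu^2 \le \max_{y\in \T_\nu}\|H(y)\|_2\,h_\nu^2.
\end{equation*}
Maximizing over $l$ converts the left-hand side into $\|f(x) - \sum_k \lambda_k f(x_k)\|_\infty$, which gives the first stated inequality (with room to spare). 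For the second inequality I would use the standard norm comparisons quoted just before the lemma: $\|H_l(y)\|_2 \le n\|H_l(y)\|_{\max}$, and then $\|H_l(y)\|_{\max} \le B_\nu$ by the defining property \eqref{Bdef} of $B_\nu$.

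I do not expect any real obstacle: the argument is an entirely routine Taylor-remainder estimate, and the only thing to be a little careful about is the (mild) notational convention that $H(x)$ stands for a componentwise Hessian with norm equal to the maximum of the componentwise norms. The slack factor of $1/2$ in the first inequality confirms that the stated bound is a loose, simple form of the sharp estimate.
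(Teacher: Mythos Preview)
Your argument is correct: the Taylor expansion with integral remainder, the cancellation of the first-order terms via $\sum_k \lambda_k(x_k - x) = 0$, and the subsequent estimates all go through, and your reading of $\|H(x)\|_2$ as $\max_l \|H_l(x)\|_2$ is the intended one. The paper itself does not give a proof of this lemma but simply cites \cite[Prop.~4.1 and Cor.~4.3]{BGH}, so there is no approach to compare against; your self-contained argument is exactly the standard one underlying that reference, and the factor $1/2$ of slack you observe is indeed present in the sharper form stated there.
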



\begin{lemma}\label{4.12}
Consider a feasible solution to Optimization Problem \ref{SDP1} and let $P$ be defined as in Definition \ref{CPA-def}. Fix a point $x\in\cD^\circ_\cT$ and a corresponding simplex $\T_\nu=\co(x_0,x_1,\ldots,x_n)\in \cT$ as in Definition \ref{basic}. Set%
\begin{equation*}
  A(y) := P(y)\rmD f(y) + \rmD f(y)\trn P(y) + (w_{ij}^\nu \cdot f(y))_{i,j=1:n}%
\end{equation*}
for all $y\in \T_\nu$. Then we have the following estimate{\rm :}%
\begin{equation}\label{Aest}
  \left\|A(x)-\sum_{k=0}^n \lambda_k A(x_k)\right\|_2 \le h_\nu^2 E_\nu.%
\end{equation}
\end{lemma}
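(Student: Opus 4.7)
The plan is to decompose $A$ into three symmetric pieces and estimate each one separately. Write
\begin{equation*}
A(y) = S(y) + S(y)\trn + G(y),\quad S(y) := P(y)\rmD f(y),\quad G(y) := (w_{ij}^\nu \cdot f(y))_{i,j=1:n},
\end{equation*}
and recall that on $\T_\nu$ the gradients $w_{ij}^\nu$ are constant, so $G$ depends on $y$ only through $f(y)$. The triangle inequality then reduces the task to bounding each of $\|G(x) - \sum_k \lambda_k G(x_k)\|_2$ and $\|S(x) - \sum_k \lambda_k S(x_k)\|_2$ (the latter twice, for $S$ and $S\trn$) by a multiple of $h_\nu^2$.

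The $G$-contribution is immediate from Lemma~\ref{help2}: the $(i,j)$-entry of $G(x) - \sum_k\lambda_k G(x_k)$ is $w_{ij}^\nu \cdot [f(x) - \sum_k\lambda_k f(x_k)]$, so its modulus is at most $\|w_{ij}^\nu\|_1 \cdot n B_\nu h_\nu^2 \le n B_\nu D_\nu h_\nu^2$, and the elementary conversion $\|\cdot\|_2 \le n \|\cdot\|_{\max}$ gives the desired $h_\nu^2$ estimate in spectral norm.

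The heart of the proof is the $S$-piece. Because $P\in\CPA[\cT]$ is affine on $\T_\nu$, we have the identity $P(x) = \sum_k \lambda_k P(x_k)$, which lets me split
\begin{equation*}
S(x) - \sum_k \lambda_k S(x_k) = P(x)\Bigl[ \rmD f(x) - \sum_k \lambda_k \rmD f(x_k)\Bigr] + \sum_k \lambda_k [P(x) - P(x_k)]\rmD f(x_k).
\end{equation*}
The key trick is that $\sum_k \lambda_k [P(x) - P(x_k)] = 0$, so the second summand may be rewritten as $\sum_k \lambda_k [P(x) - P(x_k)][\rmD f(x_k) - \rmD f(x)]$, in which \emph{both} factors are now $O(h_\nu)$. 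Without this cancellation, one would be stuck at $O(h_\nu)$ because $\rmD f(x_k)$ itself carries no intrinsic smallness. For the first summand I apply Lemma~\ref{help2} componentwise to $\rmD f = (\partial_j f_i)$, whose Hessians have entries among the third derivatives of $f$ (bounded by $B_{3,\nu}$), and combine with the bound $\|P(x)\|_2 \le C_\nu$ inherited by convexity from Constraint~2. For the rewritten second summand I use Constraint~3 to bound $\|P(x) - P(x_k)\|_2$ by a multiple of $D_\nu h_\nu$ and the mean-value theorem together with $B_\nu$ to bound $\|\rmD f(x_k) - \rmD f(x)\|_2$ by a multiple of $B_\nu h_\nu$.

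The main obstacle is precisely preventing this $O(h_\nu)$ leak in the second summand; the affine-interpolation identity makes it collapse to $O(h_\nu^2)$. Once this is in place, the remaining work is bookkeeping: adding the $G$-piece, the $S$-piece and its transpose, and converting between $\|\cdot\|_{\max}$, $\|\cdot\|_1$, and $\|\cdot\|_2$ (via $\|M\|_2 \le n\|M\|_{\max}$ for $(n\tm n)$ matrices and $\|v\|_\infty \le \|v\|_2$), yielding an overall bound of the shape $h_\nu^2 \cdot n^2[\alpha B_\nu D_\nu + \beta B_{3,\nu}C_\nu]$ with explicit numerical constants $\alpha,\beta$. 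These constants are then subsumed by $E_\nu = n^2[(1+4\sqrt{n})B_\nu D_\nu + 2n B_{3,\nu}C_\nu]$, proving \eqref{Aest}.
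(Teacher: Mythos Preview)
Your argument is correct, but the treatment of the $S$-piece differs from the paper's. The paper does not split $P(x)\rmD f(x)-\sum_k\lambda_k P(x_k)\rmD f(x_k)$ and then exploit the cancellation $\sum_k\lambda_k[P(x)-P(x_k)]=0$; instead it applies Lemma~\ref{help2} \emph{directly to each scalar product} $y\mapsto P_{il}(y)[\rmD f(y)]_{lj}$, computes the Hessian of this product (using that $P_{il}$ is affine on $\T_\nu$, so its second derivatives vanish), and bounds $\|H\|_2$ by the mixed $\|\cdot\|_1$/$\|\cdot\|_\infty$/$\|\cdot\|_{\max}$ estimates to obtain the entry-wise bound $n h_\nu^2(2\sqrt{n}B_\nu D_\nu+nB_{3,\nu}C_\nu)$. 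Summing the $G$, $S$ and $S\trn$ contributions and passing to $\|\cdot\|_2$ via $\|\cdot\|_2\le n\|\cdot\|_{\max}$ then reproduces $E_\nu$ exactly.

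Your algebraic route---splitting and using the affine identity to force the cross term to collapse to $O(h_\nu^2)$---is equally valid and makes the mechanism behind the second-order gain very transparent; with careful bookkeeping (e.g.\ $|P_{il}(x)-P_{il}(x_k)|\le D_\nu h_\nu$ and $|[\rmD f(x_k)-\rmD f(x)]_{lj}|\le \sqrt{n}B_\nu h_\nu$) it even yields a slightly smaller coefficient in front of $B_\nu D_\nu$ than the paper's $4n^{5/2}$. The only soft spot is your closing sentence: you assert that your constants $\alpha,\beta$ are ``subsumed by $E_\nu$'' without writing them down. Since the lemma claims the specific bound $h_\nu^2 E_\nu$ rather than $O(h_\nu^2)$, you should actually carry out that numerical comparison; it does go through, but it is not automatic.
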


\begin{proof}
We show this in several steps:%

{\bf Step 1: Entry-wise bounds on $\mathbf{\dot{P}(x)}$}\\
The estimate
\begin{equation}
\label{Mdbound}
\left|w^\nu_{ij}\cdot f(x)-\sum_{k=0}^{n}\lambda_k w^\nu_{ij}\cdot f(x_k)\right|
\le n B_\nu D_\nu h_\nu^2
\end{equation}
follows by H\"older's inequality, Constraints 3, and Lemma \ref{help2}:%
\begin{equation*}
  \left|w^\nu_{ij}\cdot \left(f(x)
-\sum_{k=0}^{n}\lambda_k  f(x_k)\right)\right|\le
\|w^{\nu}_{ij}\|_1\left\| f(x)-\sum_{k=0}^{n}\lambda_k f(x_k)\right\|_\infty \le D_\nu nB_\nu h_\nu^2.
\end{equation*}

{\bf Step 2: Entry-wise bounds on $\mathbf{P(x) \rmD f(x)}$ and $\mathbf{\rmD f(x)\trn P(x)}$}\\
We show that%
\begin{equation}
\label{MDbound}
\left|[P(x)\rmD f(x)]_{ij} - \sum_{k=0}^{n}\lambda_k [P(x_k)\rmD f(x_k)]_{ij}\right|\le
nh_\nu^2(2\sqrt{n}B_\nu D_\nu +nB_{3,\nu}C_\nu).%
\end{equation}

Consider two scalar-valued functions $g,h\in C^2(\T_\nu)$. We apply Lemma \ref{help2} to $gh$, yielding%
\begin{equation}
  \left|g(x)h(x)-\sum_{k=0}^{n}\lambda_k g(x_k)h(x_k)\right| \le \max_{y\in \T_\nu}\|H(y)\|_2h_\nu^2,\label{help3}
\end{equation}
where the matrix $H(y)$ is defined by $[H(y)]_{rs}:=\frac{\partial^2 (gh)(y)}{\partial x_r\partial x_s}$. Set $g(y) := P_{il}(y)$.  Since $P_{il}(y) = w^\nu_{il}\cdot(y-x_0)+P_{il}(x_0)$, we get $\frac{\partial g}{\partial x_s}(y)=[w_{il}^\nu]_s$ and
$\frac{\partial^2 g}{\partial x_r\partial x_s} (y)=0$ for all $y\in \T_\nu$. Hence,%
\begin{equation*}
  \frac{\partial }{\partial x_s}gh=\frac{\partial g}{\partial x_s} h+g
\frac{\partial h}{\partial x_s} = [w_{il}^\nu]_s h +g
\frac{\partial h}{\partial x_s}
\end{equation*}
and then%
\begin{equation*}
\frac{\partial^2 }{\partial x_r\partial x_s}gh = [w_{il}^\nu]_s\frac{\partial h}{\partial x_r}+\frac{\partial g}{\partial x_r}\frac{\partial h}{\partial x_s}+g\frac{\partial^2 h}{\partial x_r\partial x_s}
= [w_{il}^\nu]_{s}\frac{\partial h}{\partial x_r}+[w_{il}^\nu]_r\frac{\partial h}{\partial x_s}+P_{il}\frac{\partial^2 h}{\partial x_r\partial x_s}.%
\end{equation*}
Now set $h(y) := [\rmD f(y)]_{lj}$. Then $\frac{\partial h}{\partial x_r}=\frac{\partial^2 f_l}{\partial x_r \partial x_j}$ and
$\frac{\partial^2 h}{\partial x_r\partial x_s}=\frac{\partial^3 f_l}{\partial x_r\partial x_s\partial x_j}$ and thus%
\begin{equation*}
  \left|[H(y)]_{rs}\right| = \left|\frac{\partial^2 (gh)(y)}{\partial x_r\partial x_s}\right|
\le |[w^\nu_{il}]_s|B_\nu+|[w^\nu_{il}]_r|B_\nu + |P_{il}(y)|B_{3,\nu}.%
\end{equation*}
Using in succession for any  $H_1,H_2,H_3\in \R^{n\times n}$ that%
\begin{equation*}
  \|H_1+H_2+H_3\|_2\le \|H_1\|_2+\|H_2\|_2+\|H_3\|_2%
\end{equation*}
and%
\begin{equation*}
  \|H_1\|_2\le \sqrt{n} \|H_1\|_\infty,\  \|H_2\|_2\le \sqrt{n} \|H_2\|_1,\ \text{and}\ \|H_3\|_2\le n \|H_3\|_{\max},%
\end{equation*}
this delivers%
\begin{align}
\|H(y)\|_2\
&\le \sqrt{n}\|w^\nu_{il}\|_1B_\nu+\sqrt{n}\|w^\nu_{il}\|_1B_\nu+nB_{3,\nu}\max_{x\in \T_\nu}\max_{1\le i\le l\le n} |P_{il}(x)|\n\\
&\le 2\sqrt{n}B_\nu D_\nu +nB_{3,\nu}C_\nu, \label{HestX}
\end{align}
because we have $|P_{il}(y)| \le \|P(y)\|_2 \le C_\nu$ by Constraints 2.%

Hence, \eqref{help3} and \eqref{HestX} establish%
\begin{align*}
&\left|
[P(x)\rmD f(x)]_{ij}-\sum_{k=0}^{n}\lambda_k [P(x_k)\rmD f(x_k)]_{ij}\right|
= \left|\sum_{l=1}^n P_{il}(x)[\rmD f(x)]_{lj}-\sum_{l=1}^n\sum_{k=0}^{n}\lambda_k P_{il}(x_k)[\rmD f(x_k)]_{lj}\right|\n \\
 &\le \sum_{l=1}^n\left| P_{il}(x)[\rmD f(x)]_{lj}-\sum_{k=0}^{n}\lambda_k P_{il}(x_k)[\rmD f(x_k)]_{lj}\right| \le
n \cdot (2\sqrt{n}B_\nu D_\nu +nB_{3,\nu}C_\nu)\cdot h_\nu^2.%
\end{align*}


{\bf Step 3: Bounds on matrices}\\
From the definition of $A(y)$ we get
\begin{align*}
  \left\|A(x)-\sum_{k=0}^n \lambda_k A(x_k)\right\|_2 &\le \left\|P(x)\rmD f(x) - \sum_{k=0}^n \lambda_k P(x_k)\rmD f(x_k) \right\|_2 \\
&+\left\|\rmD f(x)\trn P(x) - \sum_{k=0}^n \lambda_k \rmD f(x_k)\trn P(x_k) \right\|_2 \\
&+\left\| (w_{ij}^\nu \cdot f(x))_{i,j=1:n} - \sum_{k=0}^n \lambda_k (w_{ij}^\nu \cdot f(x_k))_{i,j=1:n} \right\|_2.%
\end{align*}
The first two norms on the right-hand side are equal because $P$ is symmetric and therefore the matrices in the norms are conjugate.  The entry-wise bounds \eqref{Mdbound} and \eqref{MDbound} together with $\|H\|_2 \le n\|H\|_{\max}$ for any $H\in\R^{n\times n}$ now deliver \eqref{Aest}.
\end{proof}

Using Lemma \ref{4.12} we can now establish, that the parameter $\mu$ in Optimization Problem \ref{SDP1} is an upper bound on the generalized eigenvalues of the matrix pair $(A(x),P(x))$ for all $x\in\cD_\cT^\circ$. For completeness, we first give a short description of generalized eigenvalues as needed here.%

The generalized eigenvalue problem for two symmetric matrices $A,B\in\R^{n\times n}$, $B \succ 0$, is to find values $\lambda_i \in \R$ and corresponding nonzero vectors $x_i\in\R^n$ such that $Ax_i=\lambda_iBx_i$ for $i=1:n$. Since $B\succ 0$, we can define $B^{\frac{1}{2}} := O\trn D^{\frac{1}{2}} O$, where $B=O\trn D O$ is the spectral decomposition of $B$, i.e.\ $O$ is orthogonal and $D$ is a diagonal matrix with strictly positive entries on the diagonal. $D^{\frac{1}{2}}$ is then canonically defined as the diagonal matrix with the square-roots of the entries of $D$ on the diagonal. Further, $B^{-\frac{1}{2}}:=(B^{\frac{1}{2}})^{-1}$. The matrix $C = B^{-\frac{1}{2}}AB^{-\frac{1}{2}}$ is then symmetric and if $\lambda\in\R$ is an eigenvalue of $C$ with corresponding eigenvector $y\in\R^n$, then $B^{-\frac{1}{2}}AB^{-\frac{1}{2}}y=\lambda y$. From this $AB^{-\frac{1}{2}}y=\lambda B^{\frac{1}{2}}y=B B^{-\frac{1}{2}}y$ or $Ax=\lambda B x$ with $x=B^{-\frac{1}{2}}y$, i.e.\ $\lambda$ is a generalized eigenvalue for the matrix pair $A$ and $B$ and $x$ is a corresponding generalized eigenvector. With $\{y_i\}$ as an orthonormal set of eigenvectors of $C$, the generalized eigenvectors $x_i = B^{-\frac{1}{2}}y_i$ are thus a basis of $\R^n$ and $x_i\trn Bx_j=y_i\trn B^{-\frac{1}{2}}BB^{-\frac{1}{2}}y_j = y_i\trn y_j=\delta_{ij}$. It follows from an easy calculation that if $\lambda_{\max}$ is the largest generalized eigenvalue of the pair $(A,B)$, then%
\begin{equation*}
  0\succeq A-\mu B\ \ \text{if and only if}\ \ \mu \ge \lambda_{\max}.%
\end{equation*}
Also note that $0 \succeq A-\mu B + \alpha I$ for an $\alpha \ge 0$ clearly implies $0 \succeq A-\mu B$. Since $B\succ 0$, the smallest eigenvalue of $B$ is given by $\|B^{-1}\|_2^{-1}$ and then $\|B^{-1}\|_2^{-1} I \preceq B$ and with $\alpha \ge0$ we get%
\begin{equation}\label{LOKAX}
  A-\mu B+\alpha I \preceq A-\left(\mu - \alpha\|B^{-1}\|_2\right)B\preceq 0 \ \ \text{if}\ \ \mu - \alpha\|B^{-1}\|_2 \ge \lambda_{\max}.%
\end{equation}

From this discussion on generalized eigenvalues and Lemma \ref{4.12} we can draw the following conclusion:%

\begin{theorem}\label{thm_amr1}
Assume that we have a feasible solution to Optimization Problem \ref{SDP1} with parameter $\mu \ge 0$ and let $P(x)$ be defined from the feasible solution as in Definition \ref{CPA-def} and define for every $x\in\cD_\cT^\circ$ the matrix%
\begin{equation*}
  A(x) := P(x)\rmD f(x) + \rmD f(x)\trn P(x) + (w_{ij}^\nu \cdot f(x))_{i,j=1:n}.%
\end{equation*}
Denote for every $x\in\cD_\cT^\circ$ by $\lambda_{\max}(x)$ the largest generalized eigenvalue of the matrix pair $(A(x),P(x))$. Then $\mu \ge \lambda_{\max}(x)$ for every $x\in\cD_\cT^\circ$.
\end{theorem}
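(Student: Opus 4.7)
The plan is to transfer the vertex constraint (which is a matrix inequality carrying a positive slack term $h_\nu^2 E_\nu I$) to an arbitrary interior point via a convex combination, and then absorb the resulting interpolation error using exactly that slack. The key insight is that the slack in Constraint 4 of Optimization Problem \ref{SDP1} has been calibrated to match the spectral-norm error bound supplied by Lemma \ref{4.12}.

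First I would fix $x\in\cD_\cT^\circ$ and select the simplex $\T_\nu = \co(x_0,\ldots,x_n)$ together with barycentric coordinates $\lambda_0,\ldots,\lambda_n\in[0,1]$, $\sum_k \lambda_k = 1$, as in Definition \ref{basic}. By the CPA interpolation (Definition \ref{CPA-def}) we immediately have
\begin{equation*}
  P(x) = \sum_{k=0}^n \lambda_k P(x_k).
\end{equation*}
Constraint 4 at each vertex $x_k$ of $\T_\nu$ reads $A(x_k) - \mu P(x_k) + h_\nu^2 E_\nu I \preceq 0$, i.e.\ $A(x_k) \preceq \mu P(x_k) - h_\nu^2 E_\nu I$. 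Taking the convex combination with the weights $\lambda_k$ (and using that $\sum_k \lambda_k = 1$) I obtain
\begin{equation*}
  \sum_{k=0}^n \lambda_k A(x_k) \preceq \mu \sum_{k=0}^n \lambda_k P(x_k) - h_\nu^2 E_\nu I = \mu P(x) - h_\nu^2 E_\nu I.
\end{equation*}

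Next I invoke Lemma \ref{4.12}, which gives $\|A(x) - \sum_k \lambda_k A(x_k)\|_2 \le h_\nu^2 E_\nu$. Because $A(x)$ and each $A(x_k)$ are symmetric by construction (the term $P \rmD f + \rmD f\trn P$ is manifestly symmetric, and $(w_{ij}^\nu \cdot f(\cdot))_{i,j}$ is symmetric since $P$ is), the difference is a symmetric matrix whose spectral-norm bound upgrades to the matrix inequality
\begin{equation*}
  A(x) - \sum_{k=0}^n \lambda_k A(x_k) \preceq h_\nu^2 E_\nu I.
\end{equation*}
Adding this to the previous display and cancelling the $h_\nu^2 E_\nu I$ terms yields $A(x) \preceq \mu P(x)$, i.e.\ $A(x) - \mu P(x) \preceq 0$. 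By the characterization of generalized eigenvalues recalled just before the theorem (namely, $0 \succeq A(x) - \mu P(x)$ iff $\mu \ge \lambda_{\max}(x)$, since $P(x)$ is positive definite by Lemma \ref{le1}), the conclusion $\mu \ge \lambda_{\max}(x)$ follows.

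The only substantive ingredient is Lemma \ref{4.12}; once that is in hand, the present theorem reduces to a clean three-line combination of (i) the linearity of the CPA interpolation of $P$, (ii) the convex combination of the vertex LMIs with slack $h_\nu^2 E_\nu I$, and (iii) the spectral-norm-to-Loewner-order upgrade for symmetric perturbations. There is no further obstacle: the role of the seemingly ad hoc term $h_\nu^2 E_\nu I$ in Constraint 4 is precisely to make the interpolation error from Lemma \ref{4.12} cancel exactly at this step.
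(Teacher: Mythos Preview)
Your proof is correct and follows essentially the same approach as the paper's: fix a simplex via Definition \ref{basic}, use Lemma \ref{4.12} to convert the interpolation error into a Loewner bound, combine with the convex combination of the vertex LMIs (Constraints 4) so that the slack $h_\nu^2 E_\nu I$ cancels, and conclude via the generalized eigenvalue characterization. The paper's write-up is slightly more compressed but the logical structure is identical.
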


\begin{proof}
Let $x\in\cD_\cT^\circ$ be arbitrary and fix an $\T_\nu\in\cT$ as in Definition \ref{basic} such that $x\in\T_\nu$. By Lemma \ref{4.12} we have%
\begin{equation*}
  \left\|A(x)-\sum_{k=0}^n \lambda_k A(x_k) \right\|_2 \le h_\nu^2E_\nu,%
\end{equation*}
from which%
\begin{equation*}
  A(x) - \sum_{k=0}^n \lambda_kA(x_k) \preceq \left\|A(x)-\sum_{k=0}^n \lambda_k A(x_k) \right\|_2 I \preceq h_\nu^2 E_\nu I%
\end{equation*}
follows. But then%
\begin{align*}
A(x)-\mu P(x) &\preceq \left\|A(x)-\sum_{k=0}^n \lambda_k A(x) \right\|_2 I + \sum_{k=0}^n \lambda_k A(x_k) - \mu \sum_{k=0}^n\lambda_k P(x_k) \\
& \preceq \sum_{k=0}^n\lambda_k\left[A(x_k) - \mu P(x_k) + h_\nu^2E_\nu I\right] \preceq 0%
\end{align*}
by Constraints 4. The assertion of the lemma now follows by the discussion on generalized eigenvalues above.
\end{proof}

After we have found a parameter $\mu\ge0$ and a triangulation $\cT$ such that Optimization Problem \ref{SDP1} has a feasible solution, we can use this solution as the input to another optimization problem to get bounds on $h_{\tp}(\phi;K)$ as in Theorem \ref{thm_matpog}. We now use $P(x)$ computed by Optimization Problem \ref{SDP1} to compute two functions, $\mu,V:\cD_\cT \to \R$. The function $\mu$, satisfying $0\le \lambda_{\max}(x) \le \mu(x)\le \mu$, is a local upper bound on the largest generalized eigenvalue $\lambda_{\max}(x)$ of $(A(x),P(x))$ and $V$ is a Lyapunov-type function as used in Theorem \ref{thm_matpog}.%

\begin{opt}\label{SDP2}
Given is a feasible solution to Optimization Problem \ref{SDP1} for a system $\dot x=f(x)$, $f\in C^3(\R^n;\R^n)$, with a triangulation $\cT$, a parameter $\mu\ge 0$, and an upper bound $\widetilde m$ on the number of positive generalized eigenvalues of the matrix pairs $(A(x),P(x))$ from the feasible solution. Further, a refined triangulation $\cT^*$ of $\cT$ is given, i.e.\ $\cT^*$ is a triangulation as in Definition \ref{scdef}, $\cD_{\cT^*}=\cD_\cT$, and each simplex $\T_\nu\in\cT$ is the union of simplices $\T_\xi$ in $\cT^*$.%

The optimization problem is a semidefinite problem with linear matrix inequality constraints.%

{\bf Constants}
The constants used in problem are\,{\rm :}%
\begin{enumerate}
\item The diameter $h_\xi$ of each simplex $\T_\xi\in \cT^*$\,{\rm :}%
\begin{equation*}
  h_\xi :=\diam (\T_\xi)=\max_{x,y\in \T_\xi}\|x-y\|_2
\end{equation*}
\item $D_\nu$ and $E_\nu$ for each simplex $\T_\nu\in\cT$ -- delivered by the feasible solution to Optimization Problem \ref{SDP1}%
\item Upper bounds $B^*_\xi$ on the second-order derivatives of the components of $f$ on each simplex $\T_\xi \in \cT^*$, just as in \eqref{Bdef} but for the simplices in $\cT^*$. For example, one can set $B^*_\xi := B_\nu$ for every $\T_\xi\in \cT^*$ fulfilling $\T_\xi\subset \T_\nu\in\cT$.%
\end{enumerate}
We additionally use the functions $A,P:\cD_\cT^*\to \R^{n\times n}$ from the feasible solution to Optimization Problem \ref{SDP1}.%

{\bf Variables}
The variables of the semidefinite feasibility problem are\,{\rm :}%
\begin{enumerate}
\item $\mu(x_k)\in \mathbb R$ for all vertices $x_k$ of all simplices $\T_\nu=\co(x_0,\ldots,x_{n})\in \cT^*$ -- upper bound on the largest generalized eigenvalue%
\item $D_\xi^\mu \in \mathbb R_0^+$ for all simplices $\T_\xi \in \cT^*$ -- upper bound on the gradient of $\mu$%
\item $V(x_k)\in \mathbb R$ for all vertices $x_k$ of all simplices $\T_\nu=\co(x_0,\ldots,x_{n})\in \cT^*$ -- value of the Lyapunov-type function at $x_k$%
\item $D_\xi^V\in \mathbb R$ for all simplices $\T_\xi \in \cT^*$ -- upper bound on the gradient of $V$%
\item $Q\in \mathbb R_0^+$ -- the quantity to be minimized%
\end{enumerate}

{\bf Objective}
\begin{equation*}
\text{minimize}\ Q
\end{equation*}

{\bf Constraints}

\begin{enumerate}
\item {\bf Bound on the gradient of $\mathbf{\mu}$}\\
For each simplex $\T_\xi=\co(x_0,\ldots,x_n)\in\cT^*$\,{\rm :}
\begin{equation*}
  \|\nabla \mu_\xi\|_\infty \le D^\mu_\xi%
\end{equation*}
See Remark \ref{nugradrem} for details.
\item {\bf $\mathbf{\mu(x)}$ an upper bound on the generalized eigenvalues}\\
For each simplex $\T_\xi=\co(x_0,\ldots,x_n)\in\cT^*$ and each vertex $x_k$ of $\T_\xi$\,{\rm :}
\begin{equation}\label{LMIaux}
  A(x_k)-\mu(x_k) P(x_k)+h_\xi^2 (E_\nu + 2n\sqrt{n} D_\nu D_\xi^\mu) I \preceq 0%
\end{equation}
$E_\nu$ and $D_\nu$ correspond to the simplex $\T_\nu\in\cT$ such that $\T_\xi\subset \T_\nu$.
\item {\bf Bound on the gradient of $\mathbf{V}$}\\
For each simplex $\T_\xi=\co(x_0,\ldots,x_n)\in\cT^*$\,{\rm :}%
\begin{equation*}
\|\nabla V_\xi\|_1\le D^V_\xi
\end{equation*}
Here $\nabla V_\xi:=\nabla  V\big|_{\T_\xi}(x)$ for all $x\in \T_\xi$. It is constructed exactly as the vectors $w^\nu_{ij}$ in Optimization Problem \ref{SDP1}, see also Remark \ref{nugradrem}.
\item {\bf Upper bound on the sum of generalized eigenvalues}\\
For each simplex $\T_\xi=\co(x_0,\ldots,x_n)\in\cT^*$ and each vertex $x_k$ of $\T_\xi$\,{\rm :}
\begin{equation}\label{LPaux}
  \nabla V_\xi \cdot f(x_k) + h_\xi^2\cdot n B^*_\xi D_\xi^V  + \widetilde m\mu(x_k) \le Q%
\end{equation}
\end{enumerate}
\end{opt}
The orbital derivative of $V$ is defined as in Definition \ref{basic}, but with the triangulation $\cT^*$ of course.%

\begin{remark}
The Optimization Problem \ref{SDP2} clearly has a solution $Q \geq \tilde{m}\mu$, where $\mu$ is the parameter from Optimization Problem \ref{SDP1} chosen such that it has a feasible solution.  Indeed, just set $\mu(x_k)=\mu$ and $V(x_k)=0$ for all vertices $x_k$ of $\cT^*$.
\end{remark}


\begin{theorem}\label{X4.12}
Consider a feasible solution to Optimization Problem \ref{SDP2} and let $\mu(x)$ and $V(x)$ be constructed as in Definition \ref{CPA-def}. Then for every $x\in\cD_{\cT^*}^\circ$ we have%
\begin{equation*}
  A(x) - \mu(x) P(x) \preceq 0 \mbox{\quad and \quad} V'(x) + \widetilde m \mu(x) \le Q.%
\end{equation*}
\end{theorem}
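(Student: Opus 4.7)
The plan is to imitate the proof of Theorem \ref{thm_amr1}, this time carried out on the refined triangulation $\cT^*$ and using the constraints of Optimization Problem \ref{SDP2}. The new difficulty, compared to Theorem \ref{thm_amr1}, is that the scalar $\mu$ is no longer constant but an affine function on each simplex of $\cT^*$, so the product $\mu(x) P(x)$ does not coincide with the convex combination $\sum_k \lambda_k \mu(x_k) P(x_k)$. The extra term $2n\sqrt{n}\, D_\nu D_\xi^\mu$ appearing in \eqref{LMIaux} is precisely there to absorb this mismatch, and establishing the corresponding matrix estimate is the main obstacle. Throughout, I fix an $x \in \cD_{\cT^*}^\circ$ and choose $\T_\xi = \co(x_0,\ldots,x_n) \in \cT^*$ as in Definition \ref{basic}; since $\cT^*$ refines $\cT$, there is a $\T_\nu \in \cT$ with $\T_\xi \subset \T_\nu$, so $P$ is affine on $\T_\xi$ with gradient entries $w_{ij}^\nu$, and the bounds $B_\nu, B_{3,\nu}, C_\nu, D_\nu$ inherited from the coarser triangulation remain valid on $\T_\xi$. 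I then write $x = \sum_{k=0}^n \lambda_k x_k$ as a convex combination of the vertices.

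For the first assertion, the argument of Lemma \ref{4.12} applies verbatim on $\T_\xi$ in place of $\T_\nu$ (with diameter $h_\xi$) and delivers
\begin{equation*}
  A(x) - \sum_{k=0}^n \lambda_k A(x_k) \preceq h_\xi^2 E_\nu I.
\end{equation*}
Convex-combining Constraint 2 of Optimization Problem \ref{SDP2} over the vertices yields
\begin{equation*}
  \sum_{k=0}^n \lambda_k A(x_k) \preceq \sum_{k=0}^n \lambda_k \mu(x_k) P(x_k) - h_\xi^2 (E_\nu + 2n\sqrt{n}\, D_\nu D_\xi^\mu) I.
\end{equation*}
The heart of the proof is then to show
\begin{equation*}
  \left\| \sum_{k=0}^n \lambda_k \mu(x_k) P(x_k) - \mu(x) P(x) \right\|_2 \leq 2n\sqrt{n}\, D_\nu D_\xi^\mu\, h_\xi^2.
\end{equation*}
Since $\mu$ and each $P_{ij}$ are affine on $\T_\xi$, the elementary identity
\begin{equation*}
  \sum_{k=0}^n \lambda_k g(x_k) h(x_k) - g(x) h(x) = \sum_{k=0}^n \lambda_k \bigl(\nabla g \cdot (x_k - x)\bigr)\bigl(\nabla h \cdot (x_k - x)\bigr),
\end{equation*}
valid for affine $g,h$, reduces this to an entry-wise estimate using $\|\nabla \mu_\xi\|_\infty \leq D_\xi^\mu$, $\|w_{ij}^\nu\|_1 \leq D_\nu$, $\|x_k - x\|_2 \leq h_\xi$ and the matrix norm inequalities collected before Lemma \ref{4.12}. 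Combining the three displayed bounds and passing to the spectral norm gives $A(x) - \mu(x) P(x) \preceq 0$.

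For the second assertion, the orbital derivative equals $\dot V(x) = \nabla V_\xi \cdot f(x)$ by Definition \ref{basic}, and $\mu$ being affine on $\T_\xi$ gives the clean identity $\mu(x) = \sum_k \lambda_k \mu(x_k)$. Applying Lemma \ref{help2} together with H\"older's inequality and $\|\nabla V_\xi\|_1 \leq D_\xi^V$ yields
\begin{equation*}
  \left| \nabla V_\xi \cdot f(x) - \sum_{k=0}^n \lambda_k \nabla V_\xi \cdot f(x_k) \right| \leq h_\xi^2\, n B^*_\xi D_\xi^V.
\end{equation*}
Adding $\widetilde m \mu(x)$ on both sides and invoking the convex combination of Constraint 4 \eqref{LPaux} at the vertices $x_0,\ldots,x_n$ absorbs the error term exactly and yields $\dot V(x) + \widetilde m \mu(x) \leq Q$, completing the proof.
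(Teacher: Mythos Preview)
Your proof is correct and follows the same overall architecture as the paper's: use Lemma~\ref{4.12} on $\T_\xi$ for the $A$-part, bound the discrepancy $\sum_k \lambda_k \mu(x_k) P(x_k) - \mu(x) P(x)$ separately, and combine with the convex combination of Constraint~2; the second assertion is argued identically in both. The one genuine difference is how you control the $\mu P$ discrepancy. The paper reapplies the Hessian machinery of Lemma~\ref{help2} to the product $gh$ with $g = P_{il}$, $h = \mu$, computes $\partial^2(gh)/\partial x_r \partial x_s = [w_{il}^\nu]_s [\nabla\mu_\xi]_r + [w_{il}^\nu]_r [\nabla\mu_\xi]_s$, and bounds $\|H\|_2$ via the $\|\cdot\|_1$/$\|\cdot\|_\infty$ inequalities to obtain the entrywise bound $2\sqrt{n}\, D_\nu D_\xi^\mu\, h_\xi^2$. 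You instead exploit that both factors are \emph{affine} on $\T_\xi$ and use the exact algebraic identity
\[
  \sum_k \lambda_k g(x_k) h(x_k) - g(x)h(x) = \sum_k \lambda_k \bigl(\nabla g \cdot (x_k - x)\bigr)\bigl(\nabla h \cdot (x_k - x)\bigr),
\]
which bypasses Taylor expansion altogether. With Cauchy--Schwarz, $\|w_{ij}^\nu\|_2 \le \|w_{ij}^\nu\|_1 \le D_\nu$ and $\|\nabla\mu_\xi\|_2 \le \sqrt{n}\,\|\nabla\mu_\xi\|_\infty \le \sqrt{n}\, D_\xi^\mu$, this yields the entrywise bound $\sqrt{n}\, D_\nu D_\xi^\mu\, h_\xi^2$ and hence the $2$-norm bound $n\sqrt{n}\, D_\nu D_\xi^\mu\, h_\xi^2$, a factor of $2$ tighter than the paper's, so Constraint~\eqref{LMIaux} is met with margin. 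The paper's route has the virtue of reusing the same lemma throughout, keeping the argument uniform; yours is shorter and tailored to the affine setting.
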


\begin{proof}
Fix a point $x\in\cD^\circ_{\cT^*}$ and a corresponding simplex $\T_\xi=\co(x_0,x_1,\ldots,x_n)\in \cT^*$ as in Definition \ref{basic}. Further, denote by $\T_\nu$ the simplex in $\cT$ such that $\T_\xi\subset \T_\nu$. Now%
\begin{align}
&\left\|A(x)-\mu(x)P(x) - \sum_{k=0}^n\lambda_k\left[A(x_k)-\mu(x_k)P(x_k)\right]\right\|_2\n\\
&\ \ \ \le \left\|A(x)- \sum_{k=0}^n\lambda_kA(x_k)\right\|_2+\left\|\mu(x)P(x)-\sum_{k=0}^n\lambda_k\mu(x_k)P(x_k)\right\|_2. \label{XAXA}
\end{align}
Just as in the proof of Lemma \ref{4.12}, we can show that%
\begin{equation}\label{OOTL}
  \left\|A(x)- \sum_{k=0}^n\lambda_kA(x_k)\right\|_2 \le h_\xi^2 E_\nu.%
\end{equation}
For the second norm on the right-hand side of \eqref{XAXA} consider two scalar-valued functions $g,h:\T_\xi\to \R$, where $g,h\in C^2$.
 We apply Lemma \ref{help2} to $gh$, yielding%
\begin{equation*}
  \left|g(x)h(x)-\sum_{k=0}^{n}\lambda_k g(x_k)h(x_k)\right| \le \max_{y\in \T_\xi}\|H(y)\|_2h_\xi^2,%
\end{equation*}
where the matrix $H(y)$ is defined by $[H(y)]_{rs}:=\frac{\partial^2 (gh)(y)}{\partial x_r\partial x_s}$. Set $g(y):= P_{il}(y)$ and $h(y) := \mu(y)$. Since $\T_\xi\subset \T_\nu$, we have $P_{il}(y)=w^\nu_{il}\cdot(y-x_0)+P_{il}(x_0)$ and because $\mu(y)$ is defined as a CPA interpolation, we have $\mu(y)=[\nabla \mu_\xi]\cdot(y-x_0)+\mu(x_0)$. Thus, we have $\frac{\partial g}{\partial x_s}(y)=[w_{il}^\nu]_s$, $\frac{\partial^2 g}{\partial x_r\partial x_s}(y)=0$, $\frac{\partial h}{\partial x_s}(y)=[\nabla \mu_\xi]_s$, and
$\frac{\partial^2 h}{\partial x_r\partial x_s} (y)=0$ for all $y\in \T_\nu$. Hence,%
\begin{equation*}
  \frac{\partial}{\partial x_s}gh = \frac{\partial g}{\partial x_s} h+g
  \frac{\partial h}{\partial x_s} = [w_{il}^\nu]_s h +g[\nabla \mu_\xi]_s%
\end{equation*}
and%
\begin{align*}
\frac{\partial^2 }{\partial x_r\partial x_s}gh&=[w_{il}^\nu]_s\frac{\partial h}{\partial x_r}+\frac{\partial g}{\partial x_r} [\nabla \mu_\xi]_s =[w_{il}^\nu]_s[\nabla \mu_\xi]_r +[w_{il}^\nu]_r[\nabla \mu_\xi]_s.%
\end{align*}
Thus%
\begin{equation*}
  \left|[H(y)]_{rs}\right| = \left|[w_{il}^\nu]_s[\nabla \mu_\xi]_r +[w_{il}^\nu]_r[\nabla \mu_\xi]_s \right|
\le \left|[w_{il}^\nu]_s[\nabla \mu_\xi]_r\right| + \left|[w_{il}^\nu]_r[\nabla \mu_\xi]_s\right|.%
\end{equation*}

Using that for any $H_1,H_2\in \R^{n\times n}$ we have%
\begin{equation*}
  \|H_1+H_2\|_2\le \|H_1\|_2+\|H_2\|_2 \le \sqrt{n} \|H_1\|_1 + \sqrt{n} \|H_2\|_\infty,%
\end{equation*}
we get%
\begin{equation*}
  \|H(y)\|_2 \le 2\sqrt{n} \|w_{il}^\nu\|_1 \|\nabla \mu_\xi\|_\infty \le 2\sqrt{n} D_\nu D_\xi^\mu,%
\end{equation*}
because $\|w_{il}^\nu\|_1 \le D_\nu$ by Constraints 3 in Optimization Problem \ref{SDP1} and $\|\nabla \mu_\xi\|_\infty \le D_\xi^\mu $ by Constraints 1 in Optimization Problem \ref{SDP2}.%

We have shown that%
\begin{equation*}
  \left\|\mu(x)P(x)-\sum_{k=0}^n\lambda_k\mu(x_k)P(x_k)\right\|_{\max} \le h_{\xi}^2 \cdot2\sqrt{n} D_\nu D_\xi^\mu%
\end{equation*}
and it follows that%
\begin{equation}\label{OOTL2}
  \left\|\mu(x)P(x) - \sum_{k=0}^n\lambda_k\mu(x_k)P(x_k)\right\|_2 \le h_{\xi}^2 \cdot 2n\sqrt{n} D_\nu D_\xi^\mu.%
\end{equation}
Thus, we have by \eqref{XAXA}, \eqref{OOTL}, \eqref{OOTL2}, and Constraints 2 of Optimization Problem \ref{SDP2}%
\begin{align*}
  A(x)-\mu(x)P(x) &\preceq \sum_{k=0}^n\lambda_k[A(x_k)-\mu(x_k)P(x_k)] + \left\|A(x)- \sum_{k=0}^n\lambda_kA(x_k)\right\|_2I\\
&\ \ \ + \left\|\mu(x)P(x)-\sum_{k=0}^n\lambda_k\mu(x_k)P(x_k)\right\|_2I \\
& \preceq \sum_{k=0}^n\lambda_k[A(x_k)- \mu(x_k)P(x_k) + h_\xi^2(E_\nu+2n\sqrt{n}D_\nu D_\xi^\mu)I] \preceq 0%
\end{align*}
and the estimate \eqref{LMIauxX} follows and, as before, it also follows that $\mu(x)$ is an upper bound on the largest generalized eigenvalue of the matrix pair $(A(x),P(x))$ for every $x\in\cD_\cT^\circ$.%

Let $x\in\cD_{\cT^*}$ and $\T_\xi$ be as above. We now show the implications of Constraints 4. By H\"older's inequality and Lemma \ref{help2} we get%
\begin{align*}
& \nabla V_\xi\cdot f(x) \le \sum_{k=0}^n \lambda_k \nabla V_\xi \cdot f(x_k) + \left|\nabla V_\xi\cdot f(x) -\sum_{k=0}^n \lambda_k \nabla V_\xi \cdot f(x_k)\right| \\
& \le \sum_{k=0}^n \lambda_k \nabla V_\xi \cdot f(x_k) + \|\nabla V_\xi\|_1 \left\|f(x)-\sum_{k=1}^n \lambda_k f(x_k)\right\|_\infty
 \le \sum_{k=0}^n \lambda_k \nabla V_\xi \cdot f(x_k) + h_\xi^2 \cdot n B^*_\xi D_\xi^V%
\end{align*}
and then by Constraints 4 of Optimization Problem \ref{SDP2}%
\begin{equation*}
  \nabla V_\xi\cdot f(x) + \widetilde m \mu(x) \le \sum_{k=0}^n \lambda_k \left[\nabla V_\xi \cdot f(x_k) + h_\xi^2 \cdot n B^*_\xi D_  \xi^V + \widetilde m \mu(x_k)\right] \le \sum_{k=0}^n \lambda_k Q = Q,%
\end{equation*}
i.e.\ the estimate \eqref{mainQ}.
\end{proof}

The following lemma shows that the piecewise affine functions $P$, $V$ and $\mu$ computed by our algorithm can be approximated by smooth functions asymptotically satisfying the same inequalities. Hence, we do not get into trouble because of the differentiability assumptions in Theorem \ref{thm_matpog}.%

For any subset $\cD\subset \R^n$ and $\varepsilon>0$ define $\cD_{-\varepsilon}:= \{x\in\cD\,:\ B_\varepsilon(x)\subset \cD\}$. Define $\phi:\R^n\to \R_+$, $\phi(x) := C\exp(-1/(1-\|x\|_2))$ for $\|x\|_2<1$ and $\phi(x) := 0$ otherwise and choose the constant $C$ such that $\int_{\R^n} \phi(y)\, \rmd y = 1$. For an $\varepsilon>0$ define%
\begin{equation*}
  \widetilde\phi_\varepsilon(x) := \frac{\phi(x/\varepsilon)}{\varepsilon^n}.%
\end{equation*}
For a locally integrable $g:\cD\to\R$, $\cD\subset \R^n$, define the function $g_\varepsilon := g*\widetilde\phi_\varepsilon$, i.e.\ $g_\varepsilon(x) = \int_{\cD} g(y) \widetilde{\phi}_\varepsilon(y-x) \rmd y$. It is well-known that $g_\varepsilon,\widetilde\phi_\varepsilon \in C^\infty(\R^n)$ and if $g$ is continuous on $\cD\subset\R^n$ and $\cK\subset \cD^\circ$ is compact, then the functions $g_\varepsilon$ approximate $g$ uniformly on $\cK$, i.e.\ $\max_{x\in \cK}|g_\varepsilon(x)-g(x)|\to 0$ as $\varepsilon \to 0+$.%

\begin{lemma}\label{cpasmoothlemma}
Assume $g\in \CPA[\cT]\to\R$ (cf.~Definition \ref{CPA-def}), $\varepsilon>0$, and denote by $w_\nu$ the gradient of $g$ on $\T_\nu$. That is, $g(x) = w_\nu\cdot x+b_\nu$ on $\T_\nu$. Then for every $x\in(\cD_\cT)_{-\varepsilon}$ we have%
\begin{equation*}
  \nabla g_\varepsilon(x) = \sum_{\nu} \alpha_{\nu}^{x,\varepsilon} w_\nu,\ \ \text{where}\ \ \alpha_{\nu}^{x,\varepsilon} :=  \int_{\T_\nu\cap B_\varepsilon(x)}\widetilde{\phi}_{\ep}(x - y)\, \rmd y.%
\end{equation*}
Especially, the nonnegative numbers $\alpha_{\nu}^{x,\varepsilon}$ only depend on $x$ and $\varepsilon>0$ and not on the function $g$ and they sum to one.
\end{lemma}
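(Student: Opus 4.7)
The plan is to differentiate $g_\varepsilon$ directly under the integral sign and then apply a piecewise integration-by-parts argument (equivalently, invoke the fact that for a Lipschitz function, convolution with a mollifier and weak differentiation commute). Since $g \in \CPA[\cT]$ is Lipschitz and affine on each $\T_\nu$, its classical gradient exists pointwise on $\cD_\cT \setminus \bigcup_\nu \partial \T_\nu$ and equals $w_\nu$ on $\T_\nu^\circ$; this weak gradient is an $L^\infty$ function and thus can be convolved. The sought identity essentially says $\nabla g_\varepsilon = (\nabla g) \ast \widetilde{\phi}_\varepsilon$ at $x$, with the integral split over the simplices.

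To carry this out rigorously from scratch without invoking distribution theory, I would fix $x \in (\cD_\cT)_{-\varepsilon}$ so that $B_\varepsilon(x) \subset \cD_\cT$ and differentiate componentwise:
\begin{equation*}
\partial_k g_\varepsilon(x) = \int_{\cD_\cT} g(y)\, \partial_{x_k}\widetilde{\phi}_\varepsilon(y-x)\, \rmd y = -\sum_\nu \int_{\T_\nu \cap B_\varepsilon(x)} g(y)\, \partial_{y_k}\widetilde{\phi}_\varepsilon(y-x)\, \rmd y.
\end{equation*}
On each simplex $g$ is $C^\infty$ (being affine), so Gauss's theorem gives
\begin{equation*}
-\int_{\T_\nu} g(y)\, \partial_{y_k}\widetilde{\phi}_\varepsilon(y-x)\, \rmd y = \int_{\T_\nu} [w_\nu]_k\, \widetilde{\phi}_\varepsilon(y-x)\, \rmd y - \int_{\partial \T_\nu} g(y)\, \widetilde{\phi}_\varepsilon(y-x)\, n_k(y)\, \rmd\sigma(y),
\end{equation*}
where $\vec{n}$ is the outward unit normal. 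Summing over $\nu$, the boundary terms along shared faces of adjacent simplices cancel because $g$ is continuous across such faces while the outward normals of the two simplices are opposite, and the contributions from $\partial\cD_\cT$ vanish because $\widetilde{\phi}_\varepsilon(y-x)=0$ for $y\notin B_\varepsilon(x) \subset \cD_\cT$. What remains is exactly $\sum_\nu [w_\nu]_k\, \alpha_\nu^{x,\varepsilon}$, which is the claimed formula.

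The key properties of the $\alpha_\nu^{x,\varepsilon}$ are then immediate: they are nonnegative because $\widetilde{\phi}_\varepsilon \ge 0$; they depend only on the simplex geometry, $x$ and $\varepsilon$ (the function $g$ has disappeared from their definition); and
\begin{equation*}
\sum_\nu \alpha_\nu^{x,\varepsilon} = \int_{B_\varepsilon(x)} \widetilde{\phi}_\varepsilon(x-y)\, \rmd y = \int_{\R^n} \widetilde{\phi}_\varepsilon(z)\, \rmd z = 1,
\end{equation*}
using the evenness $\widetilde{\phi}_\varepsilon(x-y)=\widetilde{\phi}_\varepsilon(y-x)$, the fact that $\Supp \widetilde{\phi}_\varepsilon \subset \overline{B_\varepsilon(0)}$, and the normalization of $\widetilde{\phi}_\varepsilon$.

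The only delicate point is bookkeeping the boundary integrals in the piecewise integration by parts; one has to be sure that every shared face is traversed exactly twice with opposing normals and that pieces of $\partial\T_\nu$ lying on $\partial\cD_\cT$ (or outside $B_\varepsilon(x)$) contribute zero. This is where the assumption $x \in (\cD_\cT)_{-\varepsilon}$ is used in an essential way, and it is really the only thing needing care; the rest is a routine mollification identity.
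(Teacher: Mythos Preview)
Your proof is correct and follows essentially the same approach as the paper's: differentiate under the integral, convert $\nabla_x$ to $-\nabla_y$, split over simplices, and integrate by parts on each simplex. You are simply more explicit than the paper about why the boundary terms cancel (continuity of $g$ across shared faces with opposite normals, and vanishing of $\widetilde{\phi}_\varepsilon$ outside $B_\varepsilon(x)\subset\cD_\cT$) and about verifying that the $\alpha_\nu^{x,\varepsilon}$ are nonnegative, function-independent, and sum to one.
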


\begin{proof}
This follows from the following calculation, using integration by parts:%
\begin{align*}
  \nabla g_{\ep}(x) &= \int \nabla_x\widetilde{\phi}_{\ep}(x - y) g(y)\, \rmd y = \sum_{\nu} \int_{\T_\nu\cap B_\varepsilon(x)} -\nabla_y\widetilde{\phi}_{\ep}(x - y) g(y)\, \rmd y\\
					          &= \sum_{\nu} \int_{\T_\nu\cap B_\varepsilon(x)} \widetilde{\phi}_{\ep}(x - y) \nabla_y g(y)\, \rmd y = \sum_{\nu} \int_{\T_\nu\cap B_\varepsilon(x)} \widetilde{\phi}_{\ep}(x - y) w_{\nu}\, \rmd y = \sum_{\nu} \alpha_{\nu}^{x,\ep} w_{\nu}.%
\end{align*}
\end{proof}

\begin{lemma}\label{X4.13}
Given the same assumption as in Theorem \ref{X4.12}, let $\delta>0$. Then there exist smooth $P_\ep:(\cD_\cT)_{-\delta} \to \R^{n\times n}$ and $V_\ep,\mu_\ep:(\cD_\cT)_{-\delta} \to \R$, such that for every $x\in(\cD_{\cT})_{-\delta}$ we have%
\begin{equation}\label{LMIauxX}
  A_\ep(x) - \mu_\ep(x) P_\ep(x)\preceq \delta I%
\end{equation}
and%
\begin{equation}\label{mainQ}
  \dot{V}_\ep(x) + \widetilde m \mu_\ep(x) \le Q + \delta,%
\end{equation}
where%
\begin{equation*}
  A_\ep(x) := P_\ep(x)\rmD f(x) + \rmD f(x)\trn P_\ep(x) + \dot{P}_\ep(x)%
\end{equation*}
and $Q$ is the same constant as in Theorem \ref{X4.12}.
\end{lemma}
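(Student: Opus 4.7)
The natural strategy is mollification: define $P_\ep := P*\widetilde\phi_\ep$ componentwise, $V_\ep:= V*\widetilde\phi_\ep$ and $\mu_\ep:=\mu*\widetilde\phi_\ep$, all of which are smooth on $(\cD_\cT)_{-\ep}$. For any prescribed $\delta>0$ I would choose $\ep=\ep(\delta)\le\delta$ small enough to ensure both required inequalities on $(\cD_\cT)_{-\delta}$. The starting point is that by Theorem~\ref{X4.12} we have $A(y)-\mu(y)P(y)\preceq 0$ and $\dot V(y)+\widetilde m\mu(y)\le Q$ for almost every $y\in \cD_{\cT^*}$ (simplex boundaries are Lebesgue-null). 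Convolving these pointwise inequalities against the nonnegative bump $\widetilde\phi_\ep$ immediately gives
\begin{equation*}
  \int [A(y)-\mu(y)P(y)]\widetilde\phi_\ep(x-y)\,\rmd y \preceq 0, \qquad \int [\dot V(y)+\widetilde m\mu(y)]\widetilde\phi_\ep(x-y)\,\rmd y \le Q,
\end{equation*}
for all $x\in (\cD_\cT)_{-\ep}$. The task then reduces to comparing $A_\ep(x)-\mu_\ep(x)P_\ep(x)$ and $\dot V_\ep(x)+\widetilde m\mu_\ep(x)$ with the respective mollified integrals, modulo errors that vanish uniformly as $\ep\downarrow 0$.

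For the matrix inequality, the terms $P_\ep(x)\rmD f(x)$ and $\rmD f(x)\trn P_\ep(x)$ differ from $\int P(y)\rmD f(y)\widetilde\phi_\ep(x-y)\,\rmd y$ and its transpose by $\int P(y)[\rmD f(y)-\rmD f(x)]\widetilde\phi_\ep(x-y)\,\rmd y$ (and its transpose), whose spectral norms are controlled by $\max_y\|P(y)\|_2\cdot \omega_{\rmD f}(\ep)$, where $\omega_{\rmD f}$ is the modulus of continuity of $\rmD f$ on $\cD_\cT$. The product $\mu_\ep(x)P_\ep(x)$ and the convolution $\int \mu(y)P(y)\widetilde\phi_\ep(x-y)\,\rmd y$ both converge uniformly to $\mu(x)P(x)$ on $(\cD_\cT)_{-\delta}$, since $\mu$ and $P$ are continuous and bounded, so their difference is $o(1)$ uniformly in $x$.

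The main obstacle is the orbital derivative $\dot P_\ep$, which cannot be commuted with convolution because $\dot P$ jumps across simplex boundaries. This is where Lemma~\ref{cpasmoothlemma} is decisive: applied componentwise it gives $(\dot P_\ep)_{ij}(x)=\nabla(P_{ij})_\ep(x)\cdot f(x)=\sum_\nu \alpha_\nu^{x,\ep}(w_{ij}^\nu\cdot f(x))$, whereas $\int \dot P_{ij}(y)\widetilde\phi_\ep(x-y)\,\rmd y=\sum_\nu \int_{\T_\nu\cap B_\ep(x)}(w_{ij}^\nu\cdot f(y))\widetilde\phi_\ep(x-y)\,\rmd y$. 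Their difference equals $\sum_\nu \int_{\T_\nu\cap B_\ep(x)} w_{ij}^\nu\cdot [f(x)-f(y)]\widetilde\phi_\ep(x-y)\,\rmd y$, and by H\"older's inequality combined with Constraints~3 of Optimization Problem~\ref{SDP1} this is bounded by $(\max_\nu \|w_{ij}^\nu\|_1)\cdot \omega_f(\ep)$, with $\omega_f$ the modulus of continuity of $f$. The same computation applied to $\nabla V_\ep\cdot f$ shows that $\dot V_\ep(x)$ differs from $\int \dot V(y)\widetilde\phi_\ep(x-y)\,\rmd y$ by at most a constant times $\omega_f(\ep)$. Assembling all these estimates and choosing $\ep=\ep(\delta)\le \delta$ so small that every error is bounded by $\delta$ (in spectral norm for matrices, in absolute value for scalars) yields \eqref{LMIauxX} and \eqref{mainQ} on $(\cD_\cT)_{-\delta}$.
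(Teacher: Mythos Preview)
Your proposal is correct and follows essentially the same route as the paper: mollify $P$, $V$, $\mu$ with the standard bump, use Lemma~\ref{cpasmoothlemma} to express $\nabla(P_\ep)_{ij}$ and $\nabla V_\ep$ as convex combinations of the simplexwise gradients, and control all remaining discrepancies by uniform continuity of $f$, $\rmD f$, $P$, $\mu$. The only organizational difference is that the paper picks a representative point $x_\nu$ in each $\T_\nu\cap B_\ep(x)$ and compares to the values there, whereas you integrate the pointwise inequality $A(y)-\mu(y)P(y)\preceq 0$ directly and then estimate the commutator between mollification and the nonlinear operations; both routes use the same ingredients and yield the same bounds.
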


\begin{proof}
Let $P$, $V$, and $\mu$ be defined as in Theorem \ref{X4.12} and set%
\begin{equation*}
  G := \max\left\{\max_{\T_\nu\in\cT \atop i,j=1:n} \|w^\nu_{ij}\|_2,\max_{\xi\in\cT^*}\|\nabla V_\xi\|_2\right\}.%
\end{equation*}
Fix $0<\varepsilon<\delta$ so small that for all $x \in (\cD_\cT)_{-\delta}$ and all $y$ satisfying $\|x-y\|_2 < \varepsilon$ we have%
\begin{align*}
  |(B_\varepsilon)_{ij}(x)-B_{ij}(x)| &< \frac{\delta}{3n},\quad |B_{ij}(x)-B_{ij}(y)| < \frac{\delta}{3n},\\
  \|f(x)-f(y)\|_2 &< \frac{\delta}{3nG},\quad  |V(x)-V(y)| < \frac{\delta}{2},\\
  |\mu(x)-\mu(y)| &<\frac{\delta}{3\widetilde m},\quad |\mu_\varepsilon(x)-\mu(x)| < \frac{\delta}{3},%
\end{align*}
where the mollified functions with $\varepsilon$ in the subscript are defined as in Lemma \ref{cpasmoothlemma},%
\begin{equation*}
  B(x) := P(x)\rmD f(x) + \rmD f(x)\trn P(x) - \mu(x) P(x)%
\end{equation*}
and%
\begin{equation*}
  B_\varepsilon(x) := P_\varepsilon(x)\rmD f(x) + \rmD f(x)\trn P_\varepsilon(x) - \mu_\varepsilon(x) P_\varepsilon(x).%
\end{equation*}
Fix $x\in (\cD_\cT)_{-\delta}$. For each $\alpha_\nu^{x,\varepsilon}>0$, cf.~Lemma \ref{cpasmoothlemma}, for a $\T_\nu\in\cT$ select an $x_\nu$ in the interior of $\T_\nu\cap B_\varepsilon(x)$ and for each $\alpha_\xi^{x,\varepsilon}>0$ for a $\T_\xi\in\cT^*$ select an $x_\xi$ in the interior of $\T_\xi\cap B_\varepsilon(x)$.%

Now for all $i,j=1:n$ we have by the estimates above, Lemma \ref{cpasmoothlemma}, and the Cauchy-Schwarz inequality%
\begin{align*}
 (B_\varepsilon)_{ij}(x)+ \nabla (P_\varepsilon)_{ij}(x)\cdot f(x)
																	&< B_{ij}(x) + \frac{\delta}{3n} +\sum_{\nu} \alpha_{\nu}^{x,\varepsilon}  w^{\nu}_{ij} \cdot f(x) \\
&<\sum_{\nu} \alpha_{\nu}^{x,\varepsilon} \left(B_{ij}(x_\nu)+\frac{\delta}{3n}+w^{\nu}_{ij} \cdot f(x_\nu)+ \|w^{\nu}_{ij}\|_2 \frac{\delta}{3nG}\right)+ \frac{\delta}{3n} \\
&<\sum_{\nu} \alpha_{\nu}^{x,\varepsilon} \left(B_{ij}(x_\nu)+w^{\nu}_{ij} \cdot f(x_\nu)\right)+ \frac{\delta}{n}.%
\end{align*}
Hence,%
\begin{align*}
A_\varepsilon(x)-\mu_\varepsilon(x) P_\varepsilon(x) &= B_\varepsilon(x)+\dot{P}_\varepsilon(x)
\preceq  \sum_{\nu} \alpha_{\nu}^{x,\varepsilon}\left(B(x_\nu)+\dot{P}(x_\nu)\right) + \frac{\delta}{n} (1)_{ij} \\
&= \sum_{\nu} \alpha_{\nu}^{x,\varepsilon}\left(A(x_{\nu}) - \mu(x_{\nu})P(x_{\nu})\right) + \frac{\delta}{n} (1)_{ij} \preceq \delta I.%
\end{align*}
Similarly,%
\begin{align*}
 \dot{V}_\varepsilon(x)+\widetilde m \mu_\varepsilon(x) &= \sum_{\xi} \alpha_{\xi}^{x,\varepsilon}\left( \nabla V_\xi\cdot f(x) + \widetilde m \mu(x)\right) + \frac{\delta}{3}\\
 &= \sum_{\xi} \alpha_{\xi}^{x,\varepsilon}\left( \nabla V_\xi\cdot f(x_\xi) + \widetilde m \mu(x_\xi)\right) + \frac{\delta}{3n}+\frac{\delta}{3}+\frac{\delta}{3} \le Q+\delta.%
\end{align*}
\end{proof}

Note that the Constraints 1 and 2 in Optimization Problem \ref{SDP2} are not very strongly coupled to the Constraints 3 and 4. Constraints 2 balance the values $\mu(x_k)$ and the gradient $D_\xi^\mu$, whereas Constraints 4 do not have to take the gradient of $\mu$ into account. Since the gradient is multiplied by $h_\xi^2$, which is small for small simplices $\T_\xi\in \cT^*$, the gradient can be rendered less important in Constraints 2 by using smaller simplices. It is thus tempting to split Optimization Problem \ref{SDP2} into two optimization problems, the first with Constraints 1 and 2 and some objective function that makes the collection of the $\mu(x_k)$ small in some sense, and then consecutively run an optimization problem with Constraints 3 and 4, where the $\mu(x_k)$ from a solution to the first optimization problem are constants. This is especially tempting, because SDP solvers have not reached the maturity of linear programming solvers and are sometimes not able to deliver solutions to moderately sized feasible problems or worse, deliver solutions that are quite far from being feasible.%

Further, if we use Optimization Problem \ref{SDP1} to find a constant matrix $P(x)$, the optimization problem is much smaller and easier to solve. From such a solution the Optimization Problem \ref{SDP2} can be naturally split into two optimization problems as described above without any disadvantage, because the coupling between Constraints 1 and 2 on the one hand and Constraints 3 and 4 on the other hand vanishes completely. Even better, since $P(x)$ is constant, its gradient is zero and therefore $D_\nu=0$. Thus, the gradient of $\mu$ plays no role, because its upper bound $D_\xi^\mu$ is multiplied by $D_\nu$ in Constraints 2. We can thus drop Constraints 1 and compute the optimal $\mu(x_k)$ directly.%

First, for each vertex $x_k$ we find the minimum $\mu(x_k)$ such that \eqref{LMIaux} is fulfilled for every $\nu$ such that $x_k$ is a vertex of $\T_\nu$. Then we minimize $Q$ under the linear constraints \eqref{LPaux}. This is described in more detail in the next section.%

\subsection{Simplified procedure}\label{simpproc}

In the simplified procedure we restrict our search for a matrix $P(x)$ in Optimization Problem \ref{SDP1} to a constant matrix and then split Optimization Problem \ref{SDP2} into two simpler problems. In detail:%

In Optimization Problem \ref{SDP1} we set $P_{ij}(x_k) := P_{ij}$ and then $P(x_k) := (P_{ij})$ for all vertices $x_k$ of all simplices of $\cT$. Then clearly we can set $D_\nu:=0$ and $C:=C_\nu$ for all $\T_\nu \in \cT$ and the constraints simplify to:%
\begin{align*}
  &\epsilon_0 I\preceq P \preceq CI%
\end{align*}
and for each simplex $\T_\nu=\co(x_0,\ldots,x_n)\in\cT$ and each vertex $x_k$ of $\T_\nu$:%
\begin{align*}
  0 \succeq& A(x_k)- \mu P + h_\nu^2 \cdot 2n^3B_{3,\nu}C I,%
\end{align*}
where%
\begin{equation*}
  A(x_k) = P\rmD f(x_k) + \rmD f(x_k)\trn P,%
\end{equation*}
because $w^\nu_{ij}$, the gradient of $P$, is now the zero vector.%

Let us now consider Optimization Problem \ref{SDP2}. The Constraints 2 become{\rm :} For each simplex $\T_\xi=\co(x_0,\ldots,x_n)\in\cT^*$ and each vertex $x_k$ of $\T_\nu$:%
\begin{equation}\label{newA}
  0 \succeq A(x_k) - \mu(x_k) P + h_\xi^2\cdot 2n^3B_{3,\nu}C I,%
\end{equation}
because $D_\nu=0$. The variables $D^\mu_\xi$ are thus redundant and we can eliminate Constraints 1. An even farther reaching consequence is that we do not even have to combine the Constraints \eqref{newA} with Constraints 3 and 4.  We can compute the optimal $\mu(x_k)$ locally for each vertex $x_k$ by solving{\rm :}\\
For each vertex $x_k$ of the triangulation $\cT^*$ maximize the value $\mu(x_k)$ under the constraints%
\begin{equation*}
  0 \succeq A(x_k) - \mu(x_k) P + h_\xi^2\cdot 2n^3B_{3,\nu}C I%
\end{equation*}
for every $\nu$ such that $x_k\in\T_\nu\in\cT$.%

An even simpler version of this optimization problem is obtained by defining%
\begin{equation*}
  B_{3}^y := \max_{\T_\nu\in\cT}B_{3,\nu}%
\end{equation*}
and solving: For each vertex $x_k$ of the triangulation $\cT^*$ maximize the value $\mu(x_k)$ under the constraints%
\begin{equation*}
  0 \succeq A(x_k)- \mu(x_k) P + h_\xi^2\cdot 2n^3B_{3}^{x_k}C I.%
\end{equation*}

For small $h_\xi>0$ good estimates on these optimal $\mu(x_k)$ for the Optimization Problem \ref{SDP2} can be directly computed using \eqref{LOKAX}. Just set%
\begin{equation*}
  \mu(x_k) := \lambda_{\max}(x_k)+h_\xi^2\cdot 2n^3B_{3}^{x_k}C\|P^{-1}\|_2,%
\end{equation*}
where $\lambda_{\max}(x_k)$ is the largest generalized eigenvalue of the matrix pair $(A(x_k),P)$. Since $C\ge \|P\|_2$, this formula can be further simplified to%
\begin{equation*}
  \mu(x_k) := \lambda_{\max}(x_k) + h_\xi^2\cdot 2n^3B_{3}^{x_k} \kappa_2(P),%
\end{equation*}
using the condition number $\kappa_2(P) := \|P\|_2\|P^{-1}\|_2$ of $P$.%

After this being done, we can minimize $Q$ under the Constraints 3 and 4 of Optimization Problem \ref{SDP2}, and this is a linear programming problem, for which much more mature solvers exist.%

\section{An example: the Lorenz system}\label{sec_lorenz}

We consider the Lorenz system%
\begin{equation}
\label{lorenzsys}
\frac{\rmd}{\rmd t}\begin{pmatrix}
              x \\
              y \\
              z \\
            \end{pmatrix}=  \begin{pmatrix}
                              -\sigma x +\sigma y \\
                              rx-y -xz \\
                              -bz+xy \\
                            \end{pmatrix}
            =:g(x,y,z).%
\end{equation}
For our approach it is advantageous to scale the system such that its attractors are contained in a smaller set. For this purpose, define $S:=\diag(s_x,s_y,s_z)$ for constants $s_x,s_y,s_z>0$ and consider the system $\dot \bx = f(\bx)$ with $f(\bx)=S^{-1}g(S\bx)$, i.e.\ the system%
\begin{equation}
\label{slorenz}
\frac{\rmd}{\rmd t}\begin{pmatrix}
              x \\
              y \\
              z \\
            \end{pmatrix}=  \begin{pmatrix}
                              -\sigma x + \sigma \frac{s_y}{s_x} y \\
                              r\frac{s_x}{s_y}x-y -\frac{s_xs_z}{s_y}xz \\
                              -bz+\frac{s_xs_y}{s_z}xy \\
                            \end{pmatrix}.
\end{equation}
Clearly one can take%
\begin{equation*}
  B_\nu = s_x\cdot \max\left\{\frac{s_y}{s_z},\frac{s_z}{s_y}\right\}
\end{equation*}
in the optimization problems for all $\T_\nu$, independent of the triangulation $\cT$, because the right-hand side is a global bound on the second-order derivatives of $f$, and  we can set $B_{3,\nu} := 0$ for all $\T_\nu$, because the components of $f$ are second-order polynomials. We use the scaling parameters $s_x=24.5$ and $s_y=s_z=100$ and thus $B_\nu = 24.5$ in what follows.%

In \cite[\S2.2]{BLR} it is shown that if $\sigma\ge 1$ and $b \ge 2$ in \eqref{lorenzsys}, then the system is dissipative in the sense of Levinson and the region $\cD$ of dissipation fulfills%
\begin{align}
\cD&\subset \left\{(x,y,z)\in\R^3\,:\,x^2+y^2+(z-[\sigma+r])^2 \le \frac{b}{2}(\sigma+r)^2\right\}, \label{DTOI}\\
\cD&\subset \left\{(x,y,z)\in\R^3\,:\,2x^2+y^2+(z-[\sigma+r])^2 \le \left[1+\frac{(b-2)^2}{4(b-1)}\right](\sigma+r)^2\right\},\label{DTOII}\\
\cD&\subset \left\{(x,y,z)\in\R^3\,:\, y^2+(z-r)^2 \le \frac{b^2r^2}{4(b-1)}\right\}, \label{DTOIII}  \\
\cD&\subset \left\{(x,y,z)\in\R^3\,:\, z \ge 0\right\} \label{DTOIV}.
\end{align}
All attractors of \eqref{lorenzsys} are inside $\cD$. For the common parameters $\sigma=10$, $r=28$, and $b=8/3$ in \eqref{lorenzsys}, it suffices to compute our metric $P$ and Lyapunov-type function $V$ on the set%
\begin{equation*}
  \cK:=[-1,1]\times[-0.29,0.29]\times [0,0.57],%
\end{equation*}
because with these parameters \eqref{DTOIII} implies%
\begin{equation*}
  |y|\le \frac{4}{\sqrt{15}}r \le 29 = s_y\cdot 0.29%
\end{equation*}
and \eqref{DTOIII} and \eqref{DTOIV} imply%
\begin{equation*}
  0\le z \le \left(1+\frac{4}{\sqrt{15}}\right)r \le 57 = s_z\cdot 0.57,%
\end{equation*}
which in turn with \eqref{DTOII} implies that%
\begin{equation*}
 |x|\le \sqrt{\frac{1}{2}\left(\frac{16}{15}(\sigma+r)^2 - \left(\frac{4}{\sqrt{15}}r-\sigma\right)^2\right)} \le 24.5 =s_x \cdot 1.%
\end{equation*}
Thus, all the attractors of \eqref{slorenz} are inside of $\cK$.%

For our example we used the simplified procedure from Section \ref{simpproc} and the computer we used has an i9-7900X CPU.

\begin{remark}
The implementation of our algorithm, even the simplified one, is  not simple, and a detailed discussion of it is beyond the scope of this paper.  We refer the reader to \cite{HaCpp1,HaCpp2} for some implementation details on triangulations for the computation of Lyapunov functions. We used similar methods, adapted to our problem.
\end{remark}

First we compute a constant metric $P(x)=P$ on the set $\cK$ using a triangulation with the vertices%
\begin{equation*}
  \left(1.0\cdot \frac{i_x}{12}, 0.29\cdot \frac{i_y}{6},0.57\cdot \frac{i_z}{10}\right)\ \ \text{for $i_x=-12:12$, $i_y=-6:6$, and $i_z=0:10$.}
\end{equation*}
The triangulation we used is a so-called {\em standard-triangulation} as in \cite[\S4]{GH3}, but with different scaling along the different axes, i.e.\ $\rho$ in \cite[Def.~4.8]{GH3} is $\rho_x=1/12$ along the $x$-axis, $\rho_y=0.29/6$ along the $y$-axis, and $\rho_z=0.57/10$ along the $z$-axis.
We set $\epsilon_0:=0.1$ in Optimization Problem \ref{SDP1} and the constraints become%
\begin{align*}
  0.1 I \preceq P \preceq CI%
\end{align*}
for each simplex $\T_\nu\in\cT$ and for each vertex $x_k$ of $\T_\nu$:%
\begin{align*}
  0 \succeq& P\rmD f(x_k) + \rmD f(x_k)\trn P - \mu P.%
\end{align*}
The optimization problem is especially simple because $B_{3,\nu}=0$ for all $\T_\nu$, and therefore we do not even have to repeat the second constraints for all $\T_\nu$, i.e.\ we can replace {\em for each simplex $\T_\nu\in\cT$ and for each vertex $x_k$ of $\T_\nu$} with {\em for each vertex $x_k$ of a simplex in $\cT$}.%

By trying out a few different $\mu$s, we obtained a feasible solution with $\mu=27$. Investigation gave us that there is only one positive generalized eigenvalue for all $x$, so we can take $\widetilde m=1$. Writing the problem took a few seconds with our software and solving the optimization problem took 140 sec.~using the solver PENSDP 2.2 \cite{SPDsol}. The problem has 7 variables and 69,122 matrix constraints. The matrix computed is%
$$ 
P=\begin{pmatrix}
    0.1008469737786 &   -0.01415360101927 &   0\\
   -0.01415360101927 &   0.3361537095909 &  0\\
   0 & 0 &  0.3139832543019
  \end{pmatrix}
$$
which has $\|P^{-1}\|_2=0.1$ as the smallest and $C=\|P\|_2=0.3370007906512$ as the largest eigenvalue.
If we only used these results in the formula in \cite[Thm.~3.2]{PMa} for the upper bound on the topological/restoration entropy, i.e.\ set $V(x)=$const., this $P$ delivers $\lambda/(2\ln(2))\approx 19.4764$ as an upper bound.%

The formula (13) in \cite{PMa} delivers the upper bound%
\begin{equation}\label{PMest}
  \frac{1}{2\ln(2)}\left(\sqrt{(\sigma-1)^2+4r\sigma}\, -(\sigma+1)\right) \approx 17.0638
\end{equation}
for our parameters.%

For computing the Lyapunov-type function we used the triangulation $\cT^*$, which is constructed exactly as the triangulation $\cT$ above, but with the vertices%
\begin{equation}\label{gridform}
  \left( 1.0\cdot \frac{i_x}{N_x}, 0.29\cdot \frac{i_y}{N_y},0.57\cdot \frac{i_z}{N_z}\right)\ \ \text{for $i_x=-N_x:N_x$, $i_y=-N_y:N_y$, and $i_z=-1:N_z$,}
\end{equation}
for some $N_x,N_y,N_z \in \N$. We tried a few different sets of parameter values, expecting lower upper bounds on the topological/restoration entropy for larger values of $N_x$, $N_y$, and $N_z$. We used the state of the art solver GUROBI, which is free for academic use, to solve the LP problems using the barrier method.%

The first set of parameters was $N_x=30$, $N_y=14$, and $N_z=28$ and with those the LP problem with 930,032 variables and 3,800,122 constraints was written in 11 sec.~with our software and solved in 291 sec.~with the optimal value of $Q=23.9094$ which delivers the upper bound $17.247$, which is slightly worse than in \eqref{PMest}.%

The second set of parameters was $N_x=42$, $N_y=14$, and $N_z=28$ and with those the LP problem with 1,301,696 variables and 5,320,176 constraints was written in 28 sec.~with our software and solved in 889 sec.~with the optimal value of $Q=23.5254$ which delivers the upper bound $16.970$, which is slightly better than in \eqref{PMest}.%

Because we are using such a simple axially parallel triangulation, one can use somewhat less conservative bounds the LP problems.  That is, the term $nh_\xi^2B^*_\xi$ in Constraints 4 in Optimization Problem \ref{SDP2} can be replaced with a smaller number and Theorem \ref{X4.12} still holds true. For these less conservative bounds we refer to \cite[Lem.~4.16]{Mar}. Using these less conservative bounds in the LP problems gave notably better results. Using the first set of parameters, the LP problem was solved in 437 sec.~with the optimal value of $Q=22.5403$, which delivers the upper bound $16.260$, and using the second set of parameters the LP problem was solved in 508 sec.~with the optimal value of $Q=22.094$, which delivers the upper bound $15.937$.%

For the third set of parameters we took $N_x=50$, $N_y=18$, and $N_z=32$ and only used the less conservative bounds on the second-order derivatives of $f$. The LP problem had 2,265,460 variables and 9,266,354 constraints, was written in 24 sec.~with our software and solved in 877 sec.~with the optimal value of $Q=21.701$ which delivers the upper bound $15.654$.%

The fourth and final set of parameters was $N_x=70$, $N_y=22$, and $N_z=40$ and only used the less conservative bounds on the second-order derivatives of $f$. The LP problem had 4,812,572 variables and 19,699,632 constraints, was written in 61 sec.~with our software and solved in 5,801 sec.~with the optimal value of $Q=21.311$ which delivers the upper bound $15.373$.%

Thus, the best estimate we got with our method was the upper bound $15.373$ on the topological/restoration entropy, which is considerably better than $17.064$ given by formula (13) in \cite{PMa}.%
The results of the computations are summarized in Table \ref{tabresults}.%
\begin{table}
\begin{center}
\begin{tabular}{|c|c|c|c|c|c|c|}
  \hline
 $N_x$ & $N_y$ & $N_z$ & time\,[s] &  impr.~bounds & $Q$ & u.b. \\
 \hline
  30 & 14 & 28 & 302 & No & 23.909 & 17.247 \\
  30 & 14 & 28 & 448 & Yes & 22.540 & 16.260 \\
  42 & 14 & 28 & 917 & No & 23.525 &16.970 \\
  42 & 14 & 28 & 536 & Yes & 22.094 & 15.937 \\
  50 & 18 & 32 & 901 & Yes & 21.701 & 15.654 \\
  70 & 22 & 40 & 5862 & Yes & 21.311 & 15.373 \\
  \hline
\end{tabular}
\caption{The results of our computations. $N_x,N_y,N_z$ are the parameters for the grid in \eqref{gridform}, `time' is the total time in seconds needed to write and solve the problem, `impr.~bounds' states whether the improved bounds discussed in the text are used (Yes) or not (No), $Q$ is the objective that is minimized in Optimization Problem \ref{SDP2}, and `u.b.' is the associated upper bound on the topological/restoration entropy. For reference, the upper bound 17.064 is computed in \cite{PMa}.}
\label{tabresults}
\end{center}
\end{table}

Running the full Optimization algorithms \ref{SDP1} and \ref{SDP2} and thus computing a nonconstant matrix $P$ would be very interesting, but is hardly possible for examples of interest with today's SDP problems solvers. It remains interesting to see if these solvers mature enough in the near future for this to change and how much the upper bound decreases using our fully fledged method.%

\section{Concluding remarks}\label{sec_concl}

In this paper, we proposed an algorithm for computing upper bounds on the critical channel capacity for state estimation over a finite-capacity channel, a typical problem studied in networked control. The upper bounds computed by our algorithm are, at the same time, upper bounds on the topological entropy of the dynamical system under consideration. Moreover, the output of the algorithm can be used to implement a coding and estimation policy which operates over a channel of the corresponding capacity.%

It is not hard to see that topological entropy, in general, cannot be approximated very well by our algorithm, since the computed values are upper bounds on restoration entropy $h_{\res}$,
as shown in Section \ref{sec_stateest}, and the strict inequality $h_{\tp} < h_{\res}$ holds for most dynamical systems. Hence, we do not claim that our paper contributes to the problem of numerical computation of topological entropy. Some standard references on this quite intricate subject (for multi-dimensional systems) are \cite{COH,Dea,FJO,NPi}.%

At this point, it is not clear whether restoration entropy can be approximated (and not only upper-bounded) by the estimates of Theorem \ref{thm_matpog}. We believe, however, that this is the case and hope to deliver a proof in a future work.%

Our full algorithm, using both Optimization Problem \ref{SDP1} and Optimization Problem \ref{SDP2} for a non-constant matrix $P$, overstrains currently even the  best semidefinite-programming solvers in problems of interest. It will be interesting to see if this situation changes in the near future. Therefore, we derived a simplified algorithm in Section \ref{simpproc}, which computes an then uses a constant $P$.  Using this simplified algorithm allowed us to study the Lorenz system with our method and we got superior results to \cite{PMa}, where an analytical bound is derived. It remains an interesting question how much the full algorithm can improve these bounds.%

\end{document}